\definecolor{shadecolor}{RGB}{248,248,248}
\theoremstyle{plain}
\newtheorem{theorem}{Theorem}[section]
\theoremstyle{definition}
\newtheorem{definition}[theorem]{Definition}
\newtheorem{remark}[theorem]{Remark}
\newtheorem{corollary}[theorem]{Corollary}
\newtheorem{proposition}[theorem]{Proposition}
\newtheorem{lemma}[theorem]{Lemma}
\newtheorem{assumption}[theorem]{Assumption}
\newtheorem{property}[theorem]{Property}
\newcommand{\N}{\mathbb{N}}
\newcommand{\Z}{\mathbb{Z}}
\newcommand{\R}{\mathbb{R}}
\newcommand{\C}{\mathbb{C}}
\newcommand{\e}{\mathrm{e}}
\begin{document}
\title{Generalized Wright Analysis: Stochastic and Applications}
\author{\textbf{Wolfgang Bock,}\\
 Department of Mathematics,\\
 Linnaeus University,\\
 Växjö, 351 95, Växjö, Sweden.\\
 Email: wolfgang.bock@lnu.se. \and
 \textbf{Lorenzo Cristofaro}\\
 Department of Mathematics,\\
 University of Luxembourg\\ 
 Esch-sur-Alzette, Luxembourg,\\
 Email: lorenzo.cristofaro@uni.lu \and
 \textbf{Jos{é} L.~da Silva},\\
 Faculty of Exact Sciences and Engineering,\\
 CIMA, University of Madeira, Campus da Penteada,\\
 9020-105 Funchal, Portugal.\\
 Email: joses@staff.uma.pt}
\date{\today}

\maketitle

\begin{abstract}

In this paper, we investigate the stochastic counterpart of the generalized Wright analysis introduced in Beghin et al.~ in Integral Equations and Operator Theory, {\bf 97}, 2025. We define a new class of non-Gaussian and non-Markovian processes, called the generalized Fox-$H$ process, which extends well-known processes such as fractional Brownian motion and generalized grey Brownian motion. We study their joint probability density and covariance, showing the stationarity of their increments. In addition, this process has H{\"o}lder continuous paths and is represented as a time-change of fractional Brownian motion. 
We characterize the generalized Fox-$H$ noise as an element in the distribution space $(\mathcal{S})^{-1}_{\mu_\Psi}$. We conclude by establishing the existence of local times and discussing their anomalous diffusion properties.
\\[.2cm]
\noindent \emph{Keywords}: Fox-$H$ functions, non-Gaussian measures, generalized stochastic processes, 
\\[.2cm]
\noindent \emph{AMS Subject Classification 2010}: fractional Brownian motion, time-change, distribution, local times, noise of processes, special functions, Mittag-Leffler, generalized Wright functions, integral equation, integral transforms.
\end{abstract}

	\tableofcontents
	
	\newpage

 \section{Introduction} 

Since its introduction, white noise analysis has been applied to many scientific fields. The calculus uses white noise, the derivative of Brownian motion within a particular distribution space, to represent observables. Indeed, the theory combines modern concepts of harmonic analysis and probability theory to and has been used for important results in physics, finance, and other fields.

More specifically, the white noise space is the (probability) measure space $(\mathcal{S}'(\R), \mathcal{C}_\sigma(\mathcal{S}'(\R)), \mu)$, where $\mathcal{S}'(\R)$ is the dual space of the Schwartz space of rapidly decreasing smooth functions,  $\mathcal{C}_\sigma(\mathcal{S}'(\R))$ is the $\sigma$-algebra generated by cylinder sets, and $\mu$ is the Gaussian white noise measure. Other probability measures can be considered on $\mathcal{S}'(\R)$ or on another nuclear triple $\mathcal{N}\subset \mathcal{H}\subset\mathcal{N}'$ to cover a calculus for point processes or more generally, non-Gaussian measures. We refer to the following books for Gaussian analysis \cite{HKPS93,BK95,Oba,RS72} and \cite{Ito88,AKR97a,KDSSU98} for Poisson and related measures.

New classes of non-Gaussian measures were defined (on an abstract nuclear triple $\mathcal{N}\subset\mathcal{H}\subset\mathcal{N}'$) by employing elementary or special functions in the characteristic functions; see \cite{BCDS24, Beghin2022, JahnI} and references therein. In this paper, we use the class introduced in \cite{BCDS24}. There, as in the white noise case random variables, and hence stochastic processes, are realized by the dual pairing between elements of $\mathcal{N}'$ and $\mathcal{N}$. Via an Ito-isometry argument, they can be extended to pairings between elements of $\mathcal{N}'$ and $\mathcal{H}$. By considering a particular nuclear triple, where the central Hilbert space is an $L^2$ function space, we can define a class of typically non-Markovian stochastic processes, such as fractional Brownian motion (fBm) and time-changed fBm, among others. 

There is a standard way to construct test and generalized function spaces associated with a class of non-Gaussian measures. More precisely, those measures that have an analytic Laplace transform in a neighborhood of zero and have positive measure on non-empty open sets; see \cite{KSWY98} and \cite{KK99} for details. Objects such as Donsker's delta, local time, stochastic currents, and solutions of stochastic differential equations are well-defined as generalized functions in this framework.

Examples of non-Gaussian measures and related analytical frameworks are given in \cite{Beghin2022, BCP26, DSO12}. In this regard, \cite{Sch88} and \cite{Sch90} introduced a non-Gaussian family of probability measures called grey noise measures, whose characteristic functionals are given by Mittag-Leffler functions.

Moreover, the latter turned out to be the characteristic function of a non-Markovian process called generalized grey Brownian motion, whose probability density function is the fundamental solution of a "stretched" time-fractional operator; see \cite{Sch90, MMP10}.
Later on, Grothaus et al.~\cite{JahnI} extended the family of grey noise measures to 
Mittag-Leffler measures on an abstract nuclear triple $\mathcal{N}\subset\mathcal{H}\subset\mathcal{N}'$. This family of measures $\mu_\beta$, $0<\beta\le1$, on $(\mathcal{N}', \mathcal{C}_{\sigma}(\mathcal{N}'))$ satisfies 
\[ 
\int_{\mathcal{N}'} \e^{\mathrm{i} \langle \omega, \xi \rangle} \mathrm{d}\mu_\beta(\omega)=E_{\beta}\left( -\frac{1}{2}\langle \xi, \xi \rangle   \right), \qquad \xi \in \mathcal{N},  
\]
where the Mittag-Leffler function $E_{\beta}(x):=\sum_{j = 0}^\infty \frac{1}{\Gamma(\beta j +1)}x^j$, $x \in \R$.
Another goal of non-Gaussian analysis is to give a relation between general differential equations and stochastic processes. In \cite{JahnII}, specific non-Gaussian distribution spaces are shown to be required to properly define the solution of the (potential free) fractional heat equation and to represent it using a Feynman-Kac formula. Furthermore, the interplay between anomalous diffusions and fractional diffusion equations carries numerous scientific applications, such as relaxation-type differential equations, continuous time random walks, and viscoelasticity; see \cite{Koc11, MS19,Mai22}.\\
In this paper, we introduce a large class of stochastic processes called the generalized Fox-$H$ process $X_t^{\mathbbm{H},A,B}$, $t\ge0$ (abbreviated as gFHp in the following), study their properties, and explore their first applications. More precisely, we study the characterize the noise process, local times, and the related anomalous diffusion. Other applications, such as stochastic differential equations driven by gFHp noise and Green measures, will be studied in a forthcoming paper.\\
The key role of the Mittag-Leffler function in the above infinite-dimensional setting is replaced here, for the first time, by the use of generalized Wright functions $_m\Psi_p(\cdot)$, hereinafter referred to as gWf, with the Mittag-Leffler function being a special case.
More specifically, this extended setting is called generalized Wright analysis, and it is the non-Gaussian probability space denoted by $(\mathcal{N}',\mathcal{C}_\sigma(\mathcal{N}'), \mu_{\Psi} )$ and defined by the use of gWf.

The paper is organized as follows. In Section~\ref{sec:gwa}, we recall the definition and key properties of the class of measures of interest from \cite{BCDS24}. We explicitly compute the moments of this class of measures, introduce generalized stochastic processes, and define the test and distribution spaces associated with it. As an application, we characterize the strong convergence of sequences in suitable distribution spaces. We conclude this section by recalling the Radon-Nikodym derivatives for the finite-dimensional case and expressing this class of measures as a mixture of Gaussian measures with a family of densities in $\mathbb{R}^+$. In Section~\ref{Sec 3: Gene Stoch Proce}, we recall the theory of generalized random variables and fractional calculus to introduce gFHp as a special case. We study their properties, show the finite-dimensional distribution, and present the representations in terms of a positive random variable and a fractional Brownian motion. We conclude this section by establishing the existence of the gFHp noise process as a well-defined element in a distribution space. Section~\ref{sec:applications} provides applications of the study of local times associated with gFHp and demonstrates slow and fast diffusion for certain classes of this family. In the outlook and discussion, we identify the next lines of research within this framework.

 \section{Generalized Wright Analysis\label{sec:gwa}}  
In this section, we recall the class of generalized Wright measures as probability measures on a general conuclear space $\mathcal{N}'$ (see \cite{BCDS24} for more details) and the essential properties required for what follows.
 
 \subsection{Preliminaries}
 To define the desired class of measures, we introduce two families of functions: Fox-$H$ functions and the generalized Wright function $_m \Psi_p(\cdot)$. The parameter assumptions ensure that $_m \Psi_p(\cdot)$ becomes a characteristic function and is given as a mixture of Gaussian characteristic functions by Fox-$H$ densities. 
 
\begin{definition}[cf.~p.1 in \cite{SaiKil}]\label{def:H-function}
For integers $m,n,p,q$ such that $0 \leq m \leq q$, $0 \leq n \leq p$, for $a_i, b_j \in \C$ and for $\alpha_i,\beta_j \in (0,\infty)$ with $i=1,\dots,p $ and $j=1,\dots,q$, the Fox-$H$ function $H_{p,q}^{m,n}(\cdot)$ is defined via a Mellin-Barnes integral as
	
	\[ 
	H_{p,q}^{m,n}(z):=H_{p,q}^{m,n}\left[z \, \middle| \genfrac{}{}{0pt}{}{(a_i,\alpha_i)_p}{(b_j,\beta_j)_q}\right]:=\frac{1}{2\pi \mathrm{i}}\int_{\mathcal{L}} \mathcal{H}^{m,n}_{p,q}(s)z^{-s}\,\mathrm{d}s, \quad z \in \mathbb{C}\backslash \{0\},
	\]
	where 
	\[  
	\mathcal{H}^{m,n}_{p,q}(s):= \frac{\prod_{j=1}^{m} \Gamma({b_j+s\beta_j}) \prod_{i=1}^{n}\Gamma({1-a_i-s\alpha_i})}{ \prod_{i=n+1}^{p}\Gamma({a_i+s\alpha_i})\prod_{j=m+1}^{q} \Gamma({1-b_j
			-s\beta_j})}
	\]
	and $\mathcal{L}$ is the infinite contour that separates all the poles given by Equation~\eqref{def:polesb} from those given by Equation~\eqref{def:polesa} below.

\end{definition}
The function $\mathcal{H}^{m,n}_{p,q}(\cdot)$ inherits the poles from the functions $\Gamma(b_j+s\beta_j)$
\begin{equation} \label{def:polesb} b_{jl}=-\frac{b_j+l}{\beta_j}\quad j=1,\dots,m, \quad l=0,1,2,\dots \end{equation}
and the functions $\Gamma(1-a_i-s\alpha_i)$
\begin{equation} \label{def:polesa} a_{ik}=\frac{1-a_i+k}{\alpha_i}\quad i=1,\dots,n,\quad k=0,1,2,\dots. \end{equation}

The above poles do not coincide if $\alpha_i(b_j+l)\neq \beta_j(a_i-k-1)$, for $i=1,\dots,n$,  $j=1,\dots,m$, with $k,l \in \N_0$; see Equation~(1.1.6) in \cite{SaiKil}. \\
The contour $\mathcal{L}$ can take three distinct shapes, which we refer to as $\mathcal{L}_{-\infty}$, $\mathcal{L}_{+\infty}$, and $\mathcal{L}_{\mathrm{i}\gamma\infty}$, defined as follows:
\begin{itemize}
	\item $\mathcal{L}=\mathcal{L}_{-\infty}$ is a left loop situated in a horizontal strip starting at the point $-\infty + \mathrm{i} \phi_1$ and terminating at the point $-\infty + \mathrm{i} \phi_2$ with $-\infty<\phi_1<\phi_2<+\infty$;
	\item $\mathcal{L}=\mathcal{L}_{+\infty}$ is a right loop situated in a horizontal strip starting at the point $+\infty + \mathrm{i} \phi_1$ and terminating at the point $+\infty + \mathrm{i}\phi_2$ with $-\infty<\phi_1<\phi_2<+\infty$;
	\item $\mathcal{L}=\mathcal{L}_{\mathrm{i}\gamma\infty}$ is a contour starting at the point $\gamma -\mathrm{i}\infty$ and terminating at the point $\gamma +\mathrm{i}\infty$, where $\gamma \in \mathbb{R}=(-\infty,+\infty)$.
\end{itemize}
Here $z^{-s}=\exp[-s(\mathrm{Log}(|z|) + \mathrm{i}\arg(z))]$, $z \neq0$, where $\mathrm{Log}(|z|)$ represents the natural logarithm of $|z|$ and $\arg(z)$ is the principal value. 

The properties of the Fox-$H$ function depend on the contour $\mathcal{L}$ and the quantities $a^{*}, \Delta, \delta$ and $\mu$, which are defined as
\begin{eqnarray*}
    a^{*} &:=& \sum_{j=1}^m\beta_j+\sum_{i=1}^n\alpha_i-\left(\sum_{j=m+1}^q\beta_j +\sum_{i=n+1}^p\alpha_i\right), \\
    \Delta &:=& \sum_{j=1}^q\beta_j-\sum_{i=1}^p\alpha_i,   \\
    \delta &:=& \prod_{j=1}^q\beta_j^{\beta_j} \prod_{i=1}^p\alpha_i^{-\alpha_i}, \\
    \mu &:=& \sum_{j=1}^q b_j-\sum_{i=1}^p a_i +(p-q)/2,
\end{eqnarray*}
see Equations~(1.1.9), (1.1.10), (1.1.11) in \cite{SaiKil}.
Any empty sum is taken to be zero, and any empty product is taken to be $1$.

We recall the definition of the generalized Wright functions $_m\Psi_p(\cdot)$ (a subclass of Fox-$H$ functions also known as Fox-Wright functions) that represents the largest class of special functions that could be expressed as the Laplace transform of densities using Lemma~\ref{lem:HFiniteDensities} below.

For $m,p \in \N_0:=\{0,1,2,\dots\}$, $A_i,B_j\in \mathbb{C}$ and $\alpha_i,\beta_j \in \R \backslash \{0\}$, with $i=1,\dots,m$ and $j=1,\dots,p$, the generalized Wright  functions are defined by the series 
\[ 
_m\Psi_p\left[ \genfrac{}{}{0pt}{}{(A_i, \alpha_i )_{1,m}}{(B_i, \beta_j )_{1,p} }\middle|\, z \right]=\sum_{k\geq0} \frac{\prod_{i=1}^m \Gamma(A_i+\alpha_i k)}{\prod_{j=1}^p \Gamma(B_j+\beta_j k)} \frac{z^k}{k!}, \quad z \in \mathbb{C}.
\]
 They are represented by Fox-$H$ functions if $\alpha_i,\beta_j$ are positive:
 \[  
 _m\Psi_p\left[ \genfrac{}{}{0pt}{}{(A_i, \alpha_i )_{1,m}}{(B_i, \beta_j )_{1,p} }\Bigg| -z \right]=H^{1,m}_{m,p+1}\left[z \, \Bigg| \genfrac{}{}{0pt}{}{(1-A_i, \alpha_i )_{1,m}}{(0,1),(1-B_j, \beta_j )_{1,p} }\right], \quad z \in \mathbb{C} ,
 \]
 see Equation (5.2) in \cite{KST02}.

We will use this alternative explicit notation when needed:
\[  
H_{p,q}^{m,n}\left[z \, \Bigg| \genfrac{}{}{0pt}{}{(a_i,\alpha_i)_{1,p}}{(b_j,\beta_j)_{1,q}}\right]=H_{p,q}^{m,n}\left[z \,\Bigg| \genfrac{}{}{0pt}{}{(a_1,\alpha_1),\dots, (a_p,\alpha_p)}{(b_1,\beta_1),\dots,(b_q,\beta_q)}\right].
\]
In the case $p=1$ or $q=1$ we will use the short notation: $(a_i,\alpha_i)_{1,p}=(a,\alpha)$ or $(b_j,\beta_j)_{1,q}=(b,\beta)$, respectively. 

For gWfs, we will use the following short notation when necessary:
\[ 
_m\!\Psi_p(\cdot):= \, _m\!\Psi_p\left[ \genfrac{}{}{0pt}{}{(b_i+\beta_i,\beta_i)_{1,m}}{(a_j+\alpha_j,\alpha_j)_{1,p}} \middle|\, \cdot \right] .
\]

 The following assumptions fix the constraints on the parameters $a_i$, $b_j$, $\alpha_i$, $\beta_j$, $i=1,\dots,p$, $j=1,\dots,m$ and the values of Fox-$H$ functions we are considering. In addition, we will reduce the third one to be $a^* \in (0,1)$, which guaranties the analyticity of the gWfs on the entire complex plane. Indeed, in Assumption~\ref{ass:AllHdensity} below, we specify the conditions under which the Fox-$H$ densities $\varrho_H(\cdot)$ possess all moments. The densities $\varrho_H(\cdot)$, known as F$H$dam, characterize gWfs, as outlined in Lemma~\ref{lem:HFiniteDensities} below.
 
 \begin{assumption}\label{ass:AllHdensity}
    Let $m,p \in \mathbb{N}_0$ be given. We make the following assumptions on the parameters of the Fox-$H$ function:
\begin{enumerate}
\item Let $a_i \in \mathbb{R}$ and $\alpha_i>0$, for $i=1,\dots,p$, and $a_i+\alpha_i>0$, for $i=1,\dots,p$;
\item Let $b_j \in \mathbb{R}$ and $\beta_j>0$, for $j=1,\dots,m$,  and $b_j + \beta_j>0$, for $j=1,\dots,m$;
\item Let either $a^*>0$ or $a^*=0$ and $\mu<-1$;
\item Let $H^{m,0}_{p,m} \left[ \cdot \,\big|\genfrac{}{}{0pt}{}{(a_i,\alpha_i)_{1,p}}{(b_j,\beta_j)_{1,m}} \right]$ be non-negative on $(0,\infty)$.
\end{enumerate}

   In 3.~$a^*:=\sum_{j=1}^m \beta_j - \sum_{i=1}^p \alpha_i$ and $\mu:=\sum_{j=1}^m b_j -\sum_{i=1}^p a_i - (p-m)/2$.
\end{assumption}
\begin{lemma}[Fox-$H$ densities with all moments, see Lemma~5 in \cite{BCDS23}]\label{lem:HFiniteDensities}
			Let the Assumption~\ref{ass:AllHdensity} holds for $n=0$ and $q=m$. Then the corresponding Fox-$H$ density	on $[0,\infty)$
\begin{equation}\label{eq:densitiesFiniteMoments}  
\varrho_H(\tau):=\frac{1}{K}H^{m,0}_{p,m} \left[ \tau \,\Bigg|\genfrac{}{}{0pt}{}{(a_i,\alpha_i)_{1,p}}{(b_j,\beta_j)_{1,m}} \right], \quad \tau>0,\end{equation}
where 
\begin{equation}\label{eq:norm-constant}
K:=\frac{\prod_{i=1}^m \Gamma(b_i+\beta_i)}{\prod_{i=1}^p \Gamma(a_i+\alpha_i)}     \end{equation} 
has finite moments of all orders. The moments are given by 
\[
\int_{0}^\infty \tau^l  \varrho_H(\tau)\, \mathrm{d}\tau
=\frac{1}{K} \frac{\prod_{i=1}^m \Gamma(b_i+\beta_i(l+1))}{\prod_{i=1}^p \Gamma(a_i+\alpha_i(l+1))}, \quad l=0,1,\dots.
\]
Furthermore, its Laplace transform is given by
\begin{equation}\label{eq:LTdensitiesFiniteMoments} 
(\mathscr{L}\varrho_H)(s)
=\frac{1}{K}\, _m\!\Psi_p\left[ \genfrac{}{}{0pt}{}{(b_i+\beta_i,\beta_i)_{1,m}}{(a_j+\alpha_j,\alpha_j)_{1,p}} \middle| -s\right],   \end{equation}
where $s\geq 0$. Furthermore, when $a^* \in (0,1)$, $(\mathscr{L}\varrho_H)(\cdot)$ can be extended to an entire function.\\  
The density $\varrho_H(\cdot)$ mentioned above is referred to as the Fox-$H$ density with all moments finite, abbreviated as F$H$dam. The set of all random variables whose distributions possess an F$H$dam density with respect to the Lebesgue measure is denoted by $\mathcal{X}$.
\end{lemma}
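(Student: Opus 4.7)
The plan is to deduce all four claims from the Mellin-transform characterization built into the definition of the Fox-$H$ function. With $n=0$ and $q=m$, the kernel of Definition~\ref{def:H-function} collapses to
\[
\mathcal{H}^{m,0}_{p,m}(s) = \frac{\prod_{j=1}^m \Gamma(b_j+s\beta_j)}{\prod_{i=1}^p \Gamma(a_i+s\alpha_i)},
\]
the other two gamma products being empty. Because $H^{m,0}_{p,m}(\tau)$ is by construction the inverse Mellin transform of $\mathcal{H}^{m,0}_{p,m}$, Mellin inversion yields the master identity
\[
\int_0^\infty \tau^{s-1}\, H^{m,0}_{p,m}(\tau)\,\mathrm{d}\tau = \mathcal{H}^{m,0}_{p,m}(s)
\]
on the fundamental strip. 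The parameter constraints $a_i+\alpha_i>0$, $b_j+\beta_j>0$, together with the sign conditions on $a^*$ and $\mu$ in Assumption~\ref{ass:AllHdensity}, are exactly what is required to ensure that every $s=l+1$ with $l\in\mathbb{N}_0$ lies inside this strip.

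Specializing $s=l+1$ in the master identity and dividing by $K$ produces the claimed moment formula. Taking $l=0$ in particular gives $\int_0^\infty \varrho_H(\tau)\,\mathrm{d}\tau = K/K = 1$; combined with the non-negativity assumed in Assumption~\ref{ass:AllHdensity}(4), this certifies that $\varrho_H$ is a genuine probability density on $[0,\infty)$ with moments of every order finite.

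For the Laplace transform, I would expand $\e^{-s\tau}$ in its Taylor series, interchange summation and integration (legitimate for $s \geq 0$ by dominated convergence, since all moments exist and the alternating partial sums are uniformly bounded), and apply the moment formula term by term. The resulting series matches exactly the definition of $\,_m\Psi_p$ at argument $-s$, scaled by $1/K$, yielding Equation~\eqref{eq:LTdensitiesFiniteMoments}.

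The main obstacle is the final assertion: that $(\mathscr{L}\varrho_H)(\cdot)$ extends to an entire function when $a^*\in(0,1)$. The plan is to control the general term of the $\,_m\Psi_p$ series via Stirling's formula. For large $k$, each $\Gamma(b_j+\beta_j(k+1))$ behaves like $(\beta_j k)^{\beta_j k}\e^{-\beta_j k}$ up to polynomially-bounded corrections, with a similar estimate in the denominator; the leading exponential pieces combine into a factor of the form $\delta^k k^{\mu}\, k^{-a^* k}$. Dividing by $k!$ adds an extra $k^{-k}\e^k$, so the $k$-th coefficient of the $\,_m\Psi_p$ series decays super-exponentially as long as $a^*>0$, forcing an infinite radius of convergence. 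The delicate bookkeeping of polynomial, exponential, and super-exponential pieces in the Stirling expansion is where the restriction $a^*\in(0,1)$ actually pays off and is the most laborious step of the argument.
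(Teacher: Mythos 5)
First, note that the paper does not prove this lemma at all: it is imported verbatim from Lemma~5 of \cite{BCDS23}, so your proposal can only be measured against the standard argument that reference follows. Your overall route --- read off the moments from the Mellin transform $\mathcal{M}[H^{m,0}_{p,m}](s)=\mathcal{H}^{m,0}_{p,m}(s)$, integrate the exponential series term by term to get the Laplace transform, and control the growth of the $\,_m\Psi_p$ coefficients by Stirling --- is exactly the standard one, and it is consistent with how the authors themselves use Theorem~2.2 of \cite{SaiKil} in the proof of Theorem~\ref{thm:gBm_LT}. However, two steps as written contain genuine errors.

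The more serious one is the Stirling bookkeeping at the end. The gamma factors $\Gamma(b_j+\beta_j(k+1))$ sit in the \emph{numerator}, so they contribute $k^{+\beta_j k}$ each, and the denominator gammas contribute $k^{-\alpha_i k}$ each; the net polynomial-in-the-exponent factor is $k^{+a^*k}$, not $k^{-a^*k}$ as you wrote. Dividing by $k!\sim k^k\e^{-k}$ then gives a general term of order $k^{(a^*-1)k}$ times exponentials, which tends to zero for every fixed argument if and only if $a^*<1$. So the entirety of $\,_m\Psi_p(-\,\cdot\,)$ is delivered by the upper bound $a^*<1$, not by $a^*>0$ as your conclusion states; with your sign convention you would even get entirety for all $a^*>-1$, which is false (take $m=1$, $p=0$, $(b_1,\beta_1)=(0,2)$: the moments are $(2l+1)!$ and the series has radius of convergence zero). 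The role of $a^*>0$ is elsewhere: via Corollary~\ref{prop:AsymptotHDensity} it forces $\varrho_H(\tau)=O\bigl(\tau^{c}\exp(-\Delta\delta^{-1/\Delta}\tau^{1/\Delta})\bigr)$ with $1/\Delta=1/a^*>1$, which is what makes $\e^{s\tau}\varrho_H(\tau)$ integrable and legitimizes the interchange. That brings up the second error: your dominated-convergence justification via ``uniformly bounded alternating partial sums'' is false --- the partial sums of $\sum_k(-s\tau)^k/k!$ are not uniformly bounded in $\tau$ (already $1-s\tau$ is unbounded). The correct argument is Fubini--Tonelli with dominating function $\e^{s\tau}\varrho_H(\tau)$, integrable precisely because of the super-exponential decay just mentioned; for $a^*\geq 1$ term-by-term integration cannot work (the resulting series may diverge) and one must instead invoke the Mellin--Barnes representation of the Laplace transform of a Fox-$H$ function. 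Finally, your claim that the sign conditions on $a^*$ and $\mu$ ``are exactly what is required'' to place every $s=l+1$ in the fundamental strip should be replaced by the explicit asymptotics of Corollary~\ref{prop:AsymptotHDensity}: $b_j+\beta_j>0$ gives $\varrho_H(\tau)=O(\tau^{\rho})$ at zero with $\rho>-1$, hence convergence of $\int_0^1\tau^{l}\varrho_H(\tau)\,\mathrm{d}\tau$, while the decay at infinity handles the rest (with a separate word for the boundary case $a^*=0$, $\mu<-1$, where the decay is not exponential and the support may in fact be compact, as for the beta densities in class $\mathit{(C2)}$).
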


\begin{remark}\label{rem:classes-rvariables}
The probabilistic interpretation of F$H$dam densities in terms of well-known probability density functions is provided in Theorem~8 and Remark~10 in \cite{BCDS23}. Indeed, we recall that eight classes of F$H$dam are indexed as $\mathit{(C0)}$-$\mathit{(C7)}$ and described in terms of well-known density functions as beta, gamma, and the so-called generalized $M$-Wright, defined as follows.
The generalized $M$-Wright density function $\varrho_{\mathit{(C4)}}(x)$, $x \in (0,\infty)$, with parameters $\beta \in (0,1)$, $a \in \mathbb{R}$, $\alpha>0$ such that $a+\alpha>0$, is given as 
\begin{equation}\label{eq:gMWrightdensity}  
\varrho_{\mathit{(C4)}}(x):=\frac{\Gamma(1-\beta + \beta a +\beta \alpha)}{\Gamma(a+\alpha)} H^{1,0}_{1,1}\left[ x\, \Bigg| \genfrac{}{}{0pt}{}{(1-\beta+\beta a,\beta\alpha)}{(a, \alpha)} \right]. 
\end{equation}
For $a=0$ and $\alpha=1$, the density $\varrho_{\mathit{(C4)}}(\cdot)$ reduces to the well-known $M_{\beta}$ function, see Equation~(4.9) in \cite{MPS05}.
The $\mathit{(C0)}$-$\mathit{(C7)}$ F$H$dam densities are understood as:
\begin{itemize}
\item The class $\mathit{(C0)}$ corresponds to the degenerate random variable with distribution $\delta_1$.
\item The class $\mathit{(C1)}$ describes the class of gamma random variables, their product and powers. 
\item The class $\mathit{(C2)}$ is similar to $\mathit{(C1)}$ for beta random variables.
\item The class $\mathit{(C3)}$ is obtained as the product of classes $\mathit{(C1)}$ and $\mathit{(C2)}$.
\item The class $\mathit{(C4)}$ is associated with generalized $M$-Wright random variables, its products, and powers; see Equation~\eqref{eq:gMWrightdensity}.
\item The class $\mathit{(C5)}$ is obtained as the product of classes $\mathit{(C1)}$ and $\mathit{(C4)}$.
\item The class $\mathit{(C6)}$ is obtained as the product of classes $\mathit{(C2)}$ and $\mathit{(C4)}$.
\item The class $\mathit{(C7)}$ is obtained as the product of the classes $\mathit{(C3)}$ and $\mathit{(C4)}$, or, equivalently, as the product of the classes $\mathit{(C1)}$, $\mathit{(C2)}$, and $\mathit{(C4)}$.
\end{itemize}
\end{remark}

\begin{remark}\label{rem:GeneralizedWrightDerivatives}
We report here the derivatives of gWfs. Indeed, we recall that $_m\Psi_p(-z)$, $z\in \mathbb{C}$, are entire functions under the Assumption~\ref{ass:AllHdensity} on the parameters $a_i,b_j,\alpha_i,\beta_j$ for $i=1,\dots,p$ and $j=1,\dots,m$ together with $a^*\in(0,1)$. We have
\begin{eqnarray*}\label{eq:GWfunctionDerivative} 
    \frac{\mathrm{d}}{\mathrm{d} x} \, _m\Psi_p\left[ \genfrac{}{}{0pt}{}{(B_i, \beta_i )_{1,m}}{(A_j, \alpha_j )_{1,p} }\Bigg| x \right]&=&\frac{\mathrm{d}}{\mathrm{d} x}\sum_{k\geq0} \frac{\prod_{i=1}^m \Gamma(B_i+\beta_i k)}{\prod_{j=1}^p \Gamma(A_j+\alpha_j k)} \frac{x^k}{k!}\\
    &=& \sum_{k\geq1} \frac{\prod_{i=1}^m \Gamma(B_i+\beta_i k)}{\prod_{j=1}^p \Gamma(A_j+\alpha_j k)} \frac{x^{k-1}}{(k-1)!}\\
    &=& \sum_{k\geq0} \frac{\prod_{i=1}^m \Gamma(B_i+\beta_i+\beta_i k)}{\prod_{j=1}^p \Gamma(A_j+\alpha_j+\alpha_j k)} \frac{x^{k}}{k!}\\
    &=&_m\Psi_p\left[ \genfrac{}{}{0pt}{}{(B_i+\beta_i, \beta_i )_{1,m}}{(A_j+\alpha_j, \alpha_j )_{1,p} }\Bigg| x \right].
\end{eqnarray*}
Recalling the assumptions on the parameters and the properties of gWds, we highlight that:
\begin{enumerate}
    \item Derivatives of $_m\Psi_p(\cdot)$, resp.~$H^{1,m}_{m,p+1}(\cdot)$, can be formally expressed by a gWf, resp.~Fox-$H$ function;
    \item Derivatives of the functions $_m\Psi_p(\cdot)$ and $H^{1,m}_{m,p+1}(\cdot)$ do not affect the values of $a^*$ and $\Delta$ as well as the contour plot used to define the corresponding Fox-$H$ function.
\end{enumerate}
\end{remark}
Before concluding, we state the behavior of F$H$dam densities near zero and at infinity under some regularity conditions.
\begin{corollary}[See Corollary~7 in \cite{BCDS23}]\label{prop:AsymptotHDensity}
    Let the assumptions of Lemma~\ref{lem:HFiniteDensities} hold, with $a^*>0$ and the poles $b_{jl}$ in Equation \eqref{def:polesb} being simple. Then we have
    \begin{equation}\label{eq:AsymptoticHDensityinfinity} 
    H^{m,0}_{p,m}(x)\sim O\left(x^{(\Re(\mu)+1/2)/\Delta} \exp(-\Delta \delta^{-1/\Delta} x^{1/\Delta})\right), \quad x \to \infty    
    \end{equation}
    and
    \begin{equation}\label{eq:AsymptoticHDensityzero} 
    H^{m,0}_{p,m}(x)\sim O\left(x^{\rho}\right), \quad x \to 0^+,   
    \end{equation}
    where 
    \[  
    \rho:=\min_{j=1,\dots,m}\left[ \frac{b_{j}}{\beta_{j}} \right]>-1. 
    \]
\end{corollary}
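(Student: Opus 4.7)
The plan is to derive both asymptotics directly from the Mellin--Barnes representation in Definition~\ref{def:H-function}, combining residue summation (for the behaviour at $0^+$) with a Stirling-based saddle-point estimate (for the behaviour at $\infty$). Under the standing hypotheses $n=0$, $q=m$, $a^{*}>0$, and simple poles $b_{jl}$, both techniques apply and no degeneracies occur. Observe in particular that when $n=0$ and $q=m$, the general parameter $\Delta=\sum_{j=1}^{m}\beta_j-\sum_{i=1}^{p}\alpha_i$ coincides with $a^{*}$, so the sign hypothesis $a^{*}>0$ both permits closing the contour to the left and produces exponential decay at infinity.

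For the behaviour as $x\to 0^{+}$, I would take $\mathcal{L}=\mathcal{L}_{-\infty}$, which is admissible since $a^{*}>0$, and deform it into a leftward loop so that the integral equals the sum of the residues at the poles $b_{jl}=-(b_j+l)/\beta_j$ of the factors $\Gamma(b_j+s\beta_j)$. Since these poles are simple by hypothesis, each residue produces a single monomial $c_{jl}\,x^{(b_j+l)/\beta_j}$ with explicit constant $c_{jl}$ obtained from the non-polar $\Gamma$-factors evaluated at $s=b_{jl}$. As $x\to 0^{+}$, the leading term is the one with the smallest exponent, which is realized at $l=0$ and the index $j$ attaining $\rho=\min_{j=1,\dots,m}b_j/\beta_j$; this gives~\eqref{eq:AsymptoticHDensityzero}. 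The bound $\rho>-1$ is immediate from the hypothesis $b_j+\beta_j>0$ in Assumption~\ref{ass:AllHdensity}.

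For the behaviour as $x\to\infty$, I would invoke the classical saddle-point asymptotic for Fox-$H$ functions at infinity. Applying Stirling's formula to the Mellin integrand $\mathcal{H}^{m,0}_{p,m}(s)\,x^{-s}$ produces a saddle at $s_0\sim (x/\delta)^{1/\Delta}$, and the resulting Gaussian integration around $s_0$ yields simultaneously the exponential factor $\exp(-\Delta\delta^{-1/\Delta}x^{1/\Delta})$ and the algebraic prefactor $x^{(\Re(\mu)+1/2)/\Delta}$. This is precisely the content of the general asymptotic theorems collected in~\cite{SaiKil} specialised to $n=0$, $q=m$, $\Delta>0$, and may be cited directly once the applicability of the contour is verified.

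The main obstacle is bookkeeping rather than conceptual: one must verify that the simple-pole hypothesis truly rules out residue collisions among the $b_{jl}$ in the left half-plane (so the leading term at zero is a single monomial rather than a log-enhanced one), and that no subdominant terms from the auxiliary $\Gamma$-factors contaminate the saddle-point coefficient at infinity. Once these checks are made, both estimates reduce to direct applications of results from~\cite{SaiKil} and essentially no further computation is needed.
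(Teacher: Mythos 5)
Your proposal is correct and follows essentially the same route as the paper, which simply cites Corollary~7 of \cite{BCDS23}; that corollary is itself obtained by specializing the standard Kilbas--Saigo asymptotic theorems for $H^{m,0}_{p,m}$ (residue expansion at the simple left poles for $x\to 0^{+}$, exponential-type expansion for $x\to\infty$ when $n=0$ and $\Delta=a^{*}>0$), exactly as you describe. Your identification of $\Delta$ with $a^{*}$ in this case and the deduction $\rho>-1$ from $b_j+\beta_j>0$ are both accurate.
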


 \subsection{The Generalized Fox-\texorpdfstring{$H$}{H} Measure}\label{subsec:gFHm}
To introduce the gFHm, let us recall briefly the notion of a nuclear Gelfand triple. For details, see e.g., \cite{SCH71,RS72}.

Let $\big(\mathcal{H},(\cdot,\cdot)\big)$ be a real separable Hilbert space with associated norm $|\cdot|_0$. In addition, let $\mathcal{N}$ be a nuclear space continuously and densely embedded in $\mathcal{H}$, and denote by $\mathcal{N}'$ the dual of $\mathcal{N}$. The canonical dual pairing between $\mathcal{N}'$ and $\mathcal{N}$ is represented by $\langle \cdot, \cdot \rangle$. Identifying $\mathcal{H}$ with its dual space $\mathcal{H}'$ (via the Riesz isomorphism), we obtain the following nuclear Gelfand triple 
\begin{equation}\label{eq:nuclear-triple-abstract}
    \mathcal{N} \subset \mathcal{H} \subset \mathcal{N}'.
\end{equation}
In particular, we have $\langle f, \varphi \rangle = (f, \varphi)$ for $f \in \mathcal{H}$, $\varphi \in \mathcal{N}$.
We assume that $\mathcal{N}$ is given as a countable sequence of Hilbert spaces. More precisely, for each $l \in \mathbb{N}$ let $(\mathcal{H}_l,|\cdot|_l)$ be a real separable Hilbert space such that $\mathcal{N} \subset \mathcal{H}_{l+1} \subset \mathcal{H}_l \subset \mathcal{H}$ continuously and the inclusion $\mathcal{H}_{l+1} \subset \mathcal{H}_l$ is a Hilbert Schmidt operator. We assume that the norms $|\cdot|_l$ and $l\in\mathbb{N}$ are increasing; that is, $|\cdot|_l \leq |\cdot|_{l+1}$ on $\mathcal{H}_{l+1}$. The space $\mathcal{N}$ is given as the projective limit of the spaces $(\mathcal{H}_l)_{l \in \mathbb{N}}$, that is, as sets $\mathcal{N} = \bigcap_{l \in \mathbb{N}} \mathcal{H}_l$ and the topology on $\mathcal{N}$ is the coarsest locally convex topology such that all inclusions $\mathcal{N} \subset \mathcal{H}_l$ are continuous. 

This also implies a representation for $\mathcal{N}'$ as an inductive limit. Let $\mathcal{H}_{-l}$ be the dual space of $\mathcal{H}_l$ with respect to $\mathcal{H}$ and let the dual pairing between $\mathcal{H}_{-l}$ and $\mathcal{H}_l$ be indicated by $\langle\cdot,\cdot\rangle$ as well. Then $\mathcal{H}_{-l}$ is a Hilbert space, and its norm is denoted by $\left|  \cdot\right|_{-l}$. It follows from general duality theory that as set $\mathcal{N}'=\bigcup_{l\in\N_0}\mathcal{H}_{-l}$, and $\mathcal{N}'$ is equipped with the inductive topology. That is, the finest locally convex topology such that all inclusions $\mathcal{H}_{-l}\subset\mathcal{N}'$ are continuous.

Thus, we end up with a chain of dense and continuous inclusions:
\[ 
\mathcal{N}\subset\mathcal{H}_{l+1}\subset\mathcal{H}_l\subset\mathcal{H}\subset\mathcal{H}_{-l}\subset\mathcal{H}_{-(l+1)}\subset\mathcal{N}'. 
\]
For each of the real spaces mentioned, we also examine their complex versions, marked with a subscript $\C$, such as the complexification of $\mathcal{H}_l$, which becomes $\mathcal{H}_{l,\C}$, among others. In the following,    we always identify $f = [f_1,f_2]\in\mathcal{H}_{l,\C}$, $f_1,f_2\in\mathcal{H}_l$, $l\in\Z$, with $f=f_1 + \mathrm{i}f_2$. The dual pairing extends as a bilinear form to $\mathcal{N}'_{\mathbb{C}}\times\mathcal{N}_{\mathbb{C}}$.
The space $\mathcal{N}'$ is endowed with the cylinder $\sigma$-algebra $\mathcal{C}_\sigma(\mathcal
{N}')$. 

\begin{definition}[Generalized Fox-$H$ measure] \label{def:FoxHmeasu}
    Under the conditions of Lem\- ma~\ref{lem:HFiniteDensities}, the generalized Fox-$H$ measure (gFHm for short) $\mu_{\Psi}$ is defined as the unique probability measure on $(\mathcal{N}', \mathcal{C}_{\sigma}(\mathcal{N}'))$ such that
    \begin{equation} \label{eq:FoxHmeasure}   
    \int_{\mathcal{N}'}\mathrm{e}^{\mathrm{i}\langle \omega,\varphi \rangle}\,\mathrm{d}\mu_{\Psi}(\omega)=\frac{1}{K}\, _m\!\Psi_p\left[ \genfrac{}{}{0pt}{}{(b_i+\beta_i,\beta_i)_{1,m}}{(a_j+\alpha_j,\alpha_j)_{1,p}} \middle| -\frac{\langle \varphi,\varphi \rangle }{2} \right], \quad \varphi \in \mathcal{N}, 
    \end{equation}
    where $K$ is given in \eqref{eq:norm-constant}.

    We denote this class of measures by $\mathcal{M}_\Psi(\mathcal{N}')$ and the generalized Wright probability space by $(\mathcal{N}',\mathcal{C}_\sigma(\mathcal{N}'),\mu_{\Psi})$. The corresponding $L^p$ Banach spaces of complex-valued $\mathcal{C}_\sigma(\mathcal{N}')$-measurable  functions with integrable $p$-th power are denoted by $L^p(\mu_{\Psi}):=L^p\big(\mathcal{N}',\mathcal{C}_\sigma(\mathcal{N}'),\mu_{\Psi};\C\big)$, $p\geq 1$. The norm in $L^p(\mu_{\Psi})$ is denoted by $\| \cdot\|_{L^p(\mu_{\Psi})}$.
\end{definition} 

The following lemma states the relationship between elements in $\mathcal{M}_\Psi(\mathcal{N}')$ with the finite-dimensional generalized Wright measures $\mu_\Psi^d$ (see Definition~2.11 in \cite{BCDS24}). We refer to Lemma~3.3 in \cite{BCDS24} for a detailed proof.
\begin{lemma}\label{lem:ProjeMoments}
    Let $\varphi_1,\dots,\varphi_d \in \mathcal{N}$ be orthonormal in $\mathcal{H}$, then the image measure of $\mu_{\Psi}$ under the mapping $\mathcal{N}' \ni \omega \mapsto (\langle \omega , \varphi_1 \rangle,\dots, \langle \omega , \varphi_d \rangle)\in\R^d$ is the finite-dimensional gWm $\mu_{\Psi}^d$.
\end{lemma}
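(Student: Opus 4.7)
The plan is to identify the image measure with $\mu_\Psi^d$ by comparing characteristic functions on $\R^d$ and invoking the uniqueness of the Fourier transform for probability measures on Euclidean space.

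First I would fix $\xi = (\xi_1,\dots,\xi_d) \in \R^d$ and consider the test element $\varphi_\xi := \sum_{k=1}^d \xi_k \varphi_k \in \mathcal{N}$, which is well-defined since $\mathcal{N}$ is a vector space and each $\varphi_k \in \mathcal{N}$. Then by the change-of-variables formula for image measures,
\[
\int_{\R^d} \e^{\mathrm{i}\langle \xi, x\rangle_{\R^d}} \,\mathrm{d}(\mu_\Psi \circ \pi^{-1})(x)
= \int_{\mathcal{N}'} \e^{\mathrm{i} \sum_{k=1}^d \xi_k \langle \omega,\varphi_k\rangle} \,\mathrm{d}\mu_\Psi(\omega)
= \int_{\mathcal{N}'} \e^{\mathrm{i} \langle \omega, \varphi_\xi\rangle} \,\mathrm{d}\mu_\Psi(\omega),
\]
where $\pi\colon\mathcal{N}' \to \R^d$ is the projection $\omega \mapsto (\langle\omega,\varphi_1\rangle,\dots,\langle\omega,\varphi_d\rangle)$. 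Applying Definition~\ref{def:FoxHmeasu} to $\varphi_\xi$ then gives
\[
\int_{\mathcal{N}'} \e^{\mathrm{i}\langle \omega, \varphi_\xi\rangle}\,\mathrm{d}\mu_\Psi(\omega)
= \frac{1}{K}\, _m\!\Psi_p\!\left[ \genfrac{}{}{0pt}{}{(b_i+\beta_i,\beta_i)_{1,m}}{(a_j+\alpha_j,\alpha_j)_{1,p}} \middle| -\tfrac{1}{2}\langle \varphi_\xi,\varphi_\xi\rangle \right].
\]

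Next I would use the orthonormality hypothesis to simplify the argument of $_m\Psi_p$. Since $\varphi_\xi \in \mathcal{N} \subset \mathcal{H}$, the dual pairing reduces to the Hilbert inner product: $\langle \varphi_\xi,\varphi_\xi\rangle = (\varphi_\xi,\varphi_\xi)_{\mathcal{H}}$. Expanding bilinearly and using $(\varphi_i,\varphi_j) = \delta_{ij}$ yields $(\varphi_\xi,\varphi_\xi)_{\mathcal{H}} = \sum_{k=1}^d \xi_k^2 = |\xi|_{\R^d}^2$. Consequently the characteristic function of the image measure is
\[
\frac{1}{K}\, _m\!\Psi_p\!\left[ \genfrac{}{}{0pt}{}{(b_i+\beta_i,\beta_i)_{1,m}}{(a_j+\alpha_j,\alpha_j)_{1,p}} \middle| -\tfrac{1}{2}|\xi|_{\R^d}^2 \right],
\]
which coincides with the characteristic function defining the finite-dimensional gWm $\mu_\Psi^d$ (Definition~2.11 in \cite{BCDS24}). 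Since characteristic functions uniquely determine probability measures on $(\R^d, \mathcal{B}(\R^d))$, we conclude $\mu_\Psi \circ \pi^{-1} = \mu_\Psi^d$.

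There is no real obstacle here: the only subtle point is checking that all objects are meaningful, namely that $\varphi_\xi \in \mathcal{N}$ (so that Definition~\ref{def:FoxHmeasu} applies) and that $\langle \varphi_\xi,\varphi_\xi\rangle$ coincides with the Hilbert inner product on $\mathcal{H}$ (justified by the identification $\langle f,\varphi\rangle = (f,\varphi)$ for $f\in\mathcal{H}$, $\varphi\in\mathcal{N}$ stated in the preliminaries). Once these identifications are in place, the result follows directly from orthonormality and uniqueness of the Fourier transform.
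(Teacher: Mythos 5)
Your proof is correct and follows the standard argument that the paper delegates to Lemma~3.3 in \cite{BCDS24}: compute the characteristic function of the image measure via the change-of-variables formula and linearity of the dual pairing, reduce $\langle\varphi_\xi,\varphi_\xi\rangle$ to $|\xi|_{\R^d}^2$ by orthonormality, and conclude by uniqueness of the Fourier transform on $\R^d$. No issues.
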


\begin{definition}[Mixed moments]\label{def:moments}
Let $n\in\mathbb{N}$, $\mu_{\Psi}\in\mathcal{M}_\Psi(\mathcal{N}')$, and $\varphi_i\in \mathcal{N}$, $i=1,\dots,n$, be given. The generalized moments of $\mu_{\Psi}$ are defined by
\[
 M_n^{\mu_{\Psi}}(\varphi_1,\dots, \varphi_n):=\int_{\mathcal{N}'}\langle \omega^{\otimes n}, \varphi_1\otimes\dots\otimes \varphi_n \rangle\, \mathrm{d}\mu_{\Psi}(\omega).
\]
\end{definition}
The result of Lemma~\ref{lem:ProjeMoments} implies directly the following. 

\begin{theorem}\label{thm:MomentmuHInfiniteDimension}
 Let the assumptions of Lemma \ref{lem:HFiniteDensities} hold, $\varphi \in \mathcal{N}$, and $n \in \N_0$ be given. Then the odd moments $M_{2n+1}^{\mu_{\Psi}}$ are zero and the even moments $M_{2n}^{\mu_{\Psi}}$ are given by
\[   
M_{2n}^{\mu_{\Psi}}(\varphi):=M_{2n}^{\mu_{\Psi}}(\varphi,\dots,\varphi)=\frac{1}{K}\frac{\prod_{i=1}^m \Gamma(b_i + \beta_i (n+1))}{\prod_{i=1}^p \Gamma(a_i + \alpha_i (n+1))} \frac{(2n)! }{n!2^{n}}  \langle\varphi,\varphi\rangle^{n}.
\]
In particular, for $\varphi,\psi \in \mathcal{N}$ we obtain
\begin{eqnarray}
\|\langle\cdot,\varphi\rangle\langle\cdot,\psi\rangle\|_{L^1(\mu_\Psi) }=M_{2}^{\mu_{\Psi}}(\varphi,\psi)&=&\frac{1}{K}\frac{\prod_{i=1}^m \Gamma(b_i + 2\beta_i) }{\prod_{i=1}^p \Gamma(a_i + 2\alpha_i) } \langle \varphi, \psi \rangle,\label{eq:mixed-moment2}\\
\|\langle\cdot,\varphi\rangle\|^2_{L^2(\mu_\Psi) }=M_{2}^{\mu_{\Psi}}(\varphi,\varphi)&=&\frac{1}{K}\frac{\prod_{i=1}^m \Gamma(b_i + 2\beta_i) }{\prod_{i=1}^p \Gamma(a_i + 2\alpha_i) }|\varphi|^2_0.\label{eq:mixed-moment2a}
\end{eqnarray}
\end{theorem}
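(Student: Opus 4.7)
The cleanest route is to realize $\mu_\Psi$ as a scale mixture of Gaussian laws and then compute moments by conditioning. Combining Equation~\eqref{eq:FoxHmeasure} with the Laplace-transform representation \eqref{eq:LTdensitiesFiniteMoments} of the F$H$dam density $\varrho_H$, the characteristic functional can be rewritten as
\[
\int_{\mathcal{N}'} \mathrm{e}^{\mathrm{i}\langle \omega,\varphi \rangle}\,\mathrm{d}\mu_{\Psi}(\omega) = \int_0^\infty \mathrm{e}^{-\tau \langle\varphi,\varphi\rangle/2}\,\varrho_H(\tau)\,\mathrm{d}\tau, \qquad \varphi\in\mathcal{N}.
\]
The integrand on the right is the characteristic function of a centered Gaussian with variance $\tau|\varphi|_0^2$, hence under $\mu_\Psi$ the random variable $\langle\cdot,\varphi\rangle$ is, conditional on an auxiliary variable $T$ with law $\varrho_H$, distributed as $N(0,T|\varphi|_0^2)$. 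Equivalently, one could first apply Lemma~\ref{lem:ProjeMoments} to reduce to the one-dimensional gWm $\mu_\Psi^{1}$ and then use the same scale-mixture identity in dimension one; I would keep the infinite-dimensional route since it delivers the full moment sequence at once.

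With this representation in place I would compute the moments by conditioning on $T=\tau$ and invoking the classical Gaussian moment identities $\mathbb{E}[Z^{2n+1}]=0$ and $\mathbb{E}[Z^{2n}]=\sigma^{2n}(2n)!/(n!\,2^{n})$ for $Z\sim N(0,\sigma^2)$. The odd case immediately yields $M_{2n+1}^{\mu_\Psi}(\varphi)=0$, and the even case gives
\[
M_{2n}^{\mu_\Psi}(\varphi) = \frac{(2n)!}{n!\,2^n}\,|\varphi|_0^{2n}\int_0^\infty \tau^n\,\varrho_H(\tau)\,\mathrm{d}\tau,
\]
after which the explicit moment formula in Lemma~\ref{lem:HFiniteDensities}, specialized to $l=n$, produces the claimed expression for $M_{2n}^{\mu_\Psi}(\varphi)$. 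The only technical point is verifying that $\langle\cdot,\varphi\rangle^{2n}\in L^1(\mu_\Psi)$ so that the conditioning/Fubini step is legitimate; this is immediate from the fact that $\varrho_H$ has finite moments of every order.

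For the mixed second moments \eqref{eq:mixed-moment2} and \eqref{eq:mixed-moment2a} I would use polarization. Because $M_2^{\mu_\Psi}(\varphi,\varphi)$ depends on $\varphi$ only through $\langle\varphi,\varphi\rangle$ with prefactor $\kappa_1:=\frac{1}{K}\frac{\prod_{i=1}^m\Gamma(b_i+2\beta_i)}{\prod_{i=1}^p\Gamma(a_i+2\alpha_i)}$, the bilinear identity
\[
M_2^{\mu_\Psi}(\varphi,\psi)=\tfrac{1}{4}\bigl[M_2^{\mu_\Psi}(\varphi+\psi,\varphi+\psi)-M_2^{\mu_\Psi}(\varphi-\psi,\varphi-\psi)\bigr]=\kappa_1\langle\varphi,\psi\rangle
\]
yields \eqref{eq:mixed-moment2}, and \eqref{eq:mixed-moment2a} is the specialization $\psi=\varphi$. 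No serious obstacle arises; the whole theorem is powered by the scale-mixture identity supplied by Lemma~\ref{lem:HFiniteDensities}, and the rest is bookkeeping with Gaussian moments.
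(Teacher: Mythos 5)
Your argument is correct, and it is a more self-contained route than the one the paper takes. The paper simply invokes Lemma~\ref{lem:ProjeMoments}: it pushes $\mu_\Psi$ forward to the finite-dimensional generalized Wright measure $\mu_\Psi^d$ and imports the moment formulas for $\mu_\Psi^d$ from \cite{BCDS24}, so the proof in the text is a one-line reduction. You instead work directly with the scale-mixture identity obtained by combining \eqref{eq:FoxHmeasure} with \eqref{eq:LTdensitiesFiniteMoments} (the infinite-dimensional analogue of Property~\ref{property:mixture-fd}), condition on the mixing variable, and finish with the classical Gaussian moment formula and the moment formula of Lemma~\ref{lem:HFiniteDensities} at $l=n$; the Tonelli justification via finiteness of all moments of $\varrho_H$ is exactly the right technical remark. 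The polarization step for \eqref{eq:mixed-moment2} is also fine, since $(\varphi,\psi)\mapsto M_2^{\mu_\Psi}(\varphi,\psi)$ is symmetric and bilinear. Two small points: to identify the conditional law of $\langle\cdot,\varphi\rangle$ you should evaluate the characteristic functional at $u\varphi$, $u\in\R$, so that you recover the full one-dimensional characteristic function $u\mapsto\int_0^\infty e^{-\tau u^2|\varphi|_0^2/2}\varrho_H(\tau)\,\mathrm{d}\tau$ and can invoke uniqueness; and the vanishing of the general (non-diagonal) odd mixed moments $M_{2n+1}^{\mu_\Psi}(\varphi_1,\dots,\varphi_{2n+1})$ should be noted to follow from the same conditioning (a centered Gaussian vector has vanishing odd mixed moments) or from the symmetry $\omega\mapsto-\omega$ of $\mu_\Psi$, since you only treat the diagonal case explicitly. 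What your approach buys is independence from the external reference for the finite-dimensional moments; what the paper's approach buys is brevity, at the cost of leaning on \cite{BCDS24}.
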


\begin{remark}\label{rem:ext-dual-pairing}
Using the above theorem, it is possible to extend the dual pairing to $\mathcal{N}'\times\mathcal{H}$. More precisely, given $f\in\mathcal{H}$ there exists a sequence $(\varphi_n)_{n \in \N}\subset\mathcal{N}$ such that $\varphi_n\longrightarrow f$, $n\to\infty$ in $\mathcal{H}$.   It follows from \eqref{eq:mixed-moment2a} that $(\langle\cdot,\varphi_n\rangle)_{n\in\mathbb{N}}$ is a Cauchy sequence in $L^2({\mu_\Psi})$, hence it converges. Choosing a subsequence $(\langle \cdot, \varphi_{n_k} \rangle)_{k \in \N}$, we then define $\langle\cdot,f\rangle$ as an $L^2(\mu_{\Psi})$-limit of $\langle \cdot, \varphi_{n_k} \rangle$, that is
\begin{equation}
\langle\omega,f\rangle:=\lim_{k\to\infty}\langle\omega,\varphi_{n_k}\rangle,\quad \mu_\Psi\mbox{-a.a.}\;\omega\in\mathcal{N}'.
\end{equation}
\end{remark} 

There is a standard way to construct test function spaces in non-Gaussian analysis; see e.g.~\cite{KSWY98}. Given the applications we have in mind, see Section~\ref{sec:applications} below, we use the Kondratiev test function space $(\mathcal{N})^{1}$, and its dual space with respect to $L^2(\mu_\Psi)$, i.e., $(\mathcal{N})^{-1}_{\mu_{\Psi}}$, such that we obtain the triple:
\[  (\mathcal{N})^{1}\subset L^2(\mu_{\Psi}) \subset(\mathcal{N})^{-1}_{\mu_{\Psi}}.\]
The dual pairing $\langle\!\langle \cdot,\cdot \rangle\!\rangle_{\mu_{\Psi}}$ between $ (\mathcal{N})^{-1}_{\mu_{\Psi}} $ and $(\mathcal{N})^{1}$ corresponds to the bilinear extension of the inner product from $L^2(\mu_{\Psi})$.\\
To define the $S_{\mu_{\Psi}}$-transform, we first need to introduce the normalized exponential $e_{\mu_{\Psi}}(\cdot, \xi)$, $\xi\in\mathcal{N}_{\mathbb{C}}$:
 \[
 \mathrm{e}_{\mu_{\Psi}}(\cdot,\xi):= \frac{\e^{\langle \cdot, \xi\rangle}}{l_{\mu_{\Psi}}(\xi)}:=\frac{\e^{\langle \cdot, \xi\rangle}}{\mathbb{E}_{\mu_\Psi}[\mathrm{e}^{\langle\cdot,\xi\rangle}]}.
 \]
The normalized exponential $e_{\mu_{\Psi}}(\cdot, \xi)$ is a test function in $(\mathcal{N})^{1}$ of finite order (cf.~Example 6 in \cite{KSWY98}) if and only if $\xi\in\mathcal{N}_\mathbb{C}$ is in a neighborhood of zero, that is, for any $\xi\in U_{l,k}=\{\xi \in \mathcal{N}_{\mathbb{C}}\mid \, 2^k |\xi|_l <1\}$.  
The $S_{\mu_{\Psi}}$-transform of $\Phi \in (\mathcal{N})^{-1}_{\mu_{\Psi}}$ is defined by
\[ (S_{\mu_{\Psi}}\Phi )(\xi):= \langle\!\langle \Phi, e_{\mu_{\Psi}}(\cdot,\xi) \rangle\!\rangle_{\mu_{\Psi}}=\frac{1}{l_{\mu_{\Psi}}(\xi)}\int_{\mathcal{S}'}\e^{\langle \omega, \xi\rangle}\Phi(\omega)\,\mu_{\Psi}(\mathrm{d}\omega), \quad \xi \in  U_{l,k}, \]
where $l_{\mu_{\Psi}}(\xi)$ follows from \eqref{eq:FoxHmeasure} at $\varphi=-\mathrm{i}\,\xi$:
\[ 
l_{\mu_{\Psi}}(\xi)=\frac{1}{K}\, _m\!\Psi_p\left[ \genfrac{}{}{0pt}{}{(b_i+\beta_i,\beta_i)_{1,m}}{(a_j+\alpha_j,\alpha_j)_{1,p}} \middle|\, \frac{\langle \xi,\xi \rangle }{2} \right].
\]
The $S_{\mu_{\Psi}}$-transform characterizes the elements of $(\mathcal{N})^{-1}_{\mu_\Psi}$ in terms of holomorphic functions.
\begin{theorem}[Cf.~Thm.~8.34 in \cite{KSWY98}]\label{thm:StransIsomor}
    The $S_{\mu_{\Psi}}$-transform is a topological isomorphism from $(\mathcal{N})^{-1}_{\mu_{\Psi}}$ to $\mathrm{Hol}_{0}(\mathcal{N}_{\mathbb{C}})$.
\end{theorem}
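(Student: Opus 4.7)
The plan is to reduce the statement to the general non-Gaussian characterization theorem of Kondratiev--Streit--Westerkamp--Yan (\cite{KSWY98}, Thm.~8.34), which the citation in the statement already signals. That abstract theorem delivers the topological isomorphism between $(\mathcal{N})^{-1}_{\mu}$ and $\mathrm{Hol}_0(\mathcal{N}_\mathbb{C})$ for every probability measure $\mu$ on $(\mathcal{N}',\mathcal{C}_\sigma(\mathcal{N}'))$ satisfying two structural hypotheses: (i) the Laplace transform $l_\mu$ is holomorphic in an open neighborhood of zero in $\mathcal{N}_\mathbb{C}$, and (ii) $\mu$ is strictly positive on non-empty open subsets of $\mathcal{N}'$. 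The task therefore reduces to verifying (i) and (ii) for $\mu_{\Psi}$ and then invoking the abstract result.

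For (i), I would start from formula \eqref{eq:FoxHmeasure} and substitute $\varphi=-\mathrm{i}\xi$, which gives
\[
l_{\mu_\Psi}(\xi)=\frac{1}{K}\, _m\!\Psi_p\!\left[\genfrac{}{}{0pt}{}{(b_i+\beta_i,\beta_i)_{1,m}}{(a_j+\alpha_j,\alpha_j)_{1,p}}\,\middle|\,\tfrac{1}{2}\langle\xi,\xi\rangle\right],\qquad \xi\in\mathcal{N}_\mathbb{C}.
\]
Under the standing condition $a^*\in(0,1)$, the last part of Lemma~\ref{lem:HFiniteDensities} asserts that $_m\Psi_p(-\cdot)$ extends to an entire function on $\mathbb{C}$. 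Composing with the continuous quadratic form $\xi\mapsto\frac{1}{2}\langle\xi,\xi\rangle$, which is holomorphic on each $\mathcal{H}_{l,\mathbb{C}}$, yields that $l_{\mu_\Psi}$ is in fact entire on $\mathcal{N}_\mathbb{C}$. This also gives the explicit neighborhood $U_{l,k}$ on which $e_{\mu_\Psi}(\cdot,\xi)$ is a bona fide test function of finite order in $(\mathcal{N})^1$, in agreement with the discussion preceding the theorem.

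For (ii), I would invoke the mixture-of-Gaussians representation announced in Section~\ref{sec:gwa} of the excerpt: $\mu_\Psi=\int_0^\infty \mathcal{G}_\tau\, \varrho_H(\tau)\,\mathrm{d}\tau$, where $\mathcal{G}_\tau$ denotes the centered Gaussian measure on $\mathcal{N}'$ with covariance $\tau(\cdot,\cdot)$ and $\varrho_H$ is the F$H$dam density on $(0,\infty)$. Each $\mathcal{G}_\tau$ with $\tau>0$ has full topological support on $\mathcal{N}'$, so for any non-empty open $U\subset\mathcal{N}'$ one has $\mathcal{G}_\tau(U)>0$ on a set of $\tau$ of full $\varrho_H$-mass; integrating yields $\mu_\Psi(U)>0$. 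This establishes (ii).

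The main obstacle, such as it is, is not a computation but bookkeeping: making sure the neighborhood of analyticity of $l_{\mu_\Psi}$ is compatible with the particular scale of Hilbert spaces used to construct $(\mathcal{N})^1$ and $(\mathcal{N})^{-1}_{\mu_\Psi}$, and that the characterization $\mathrm{Hol}_0(\mathcal{N}_\mathbb{C})$ on the image side really matches the inductive-limit topology inherited from the scale. Since $a^*\in(0,1)$ produces an entire Laplace transform (much stronger than the mere holomorphy near zero required by \cite{KSWY98}), the compatibility is automatic, and the conclusion follows from Thm.~8.34 of \cite{KSWY98} applied to $\mu_\Psi$.
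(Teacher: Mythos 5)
Your proposal is correct and follows essentially the same route as the paper, which states the result purely by reference to Theorem~8.34 of \cite{KSWY98} without spelling out the verification of its hypotheses. Your explicit checks --- holomorphy of $l_{\mu_\Psi}$ near zero via the entire extension of $_m\Psi_p(-\cdot)$ under $a^*\in(0,1)$ from Lemma~\ref{lem:HFiniteDensities}, and strict positivity on open sets via the Gaussian-mixture representation of Property~\ref{property:mixture-fd} --- are exactly the conditions the paper's framework (and its introduction) presupposes, so the argument is complete.
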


The above characterization theorem leads directly to two corollaries for integrals of $(\mathcal{N})^{-1}_{\mu_{\Psi}}$-valued elements (in the weak sense) and the convergence of sequences in $(\mathcal{N})^{-1}_{\mu_{\Psi}}$.

\begin{theorem}[See Thm.~4.7~in \cite{BCDS24}]\label{thm:CharIntegrableMap}
    Let $(T,\mathcal{B},\nu)$ be a measure space and $\Phi_t\in (\mathcal{N})^{-1}_{\mu_{\Psi}}$ for all $t\in T$. Let $\mathcal{U}\subset\mathcal{N}_\C$ be an appropriate neighborhood of zero and $C$ a positive constant such that:
    \begin{description}
	\item{1.} $(S_{\mu_{\Psi}}\Phi_\cdot)(\xi)\colon T\to\C$ is measurable for all $\xi\in\mathcal{U}$.
	\item{2.} $\int_T |(S_{\mu_{\Psi}}\Phi_t)(\xi)|\,\mathrm{d}\nu(t) \leq C$ for all $\xi\in\mathcal{U}$.
    \end{description}
    Then there exists $\Xi\in (\mathcal{N})^{-1}_{\mu_{\Psi}}$ such that for all $\xi\in\mathcal{U}$
    \[
    (S_{\mu_{\Psi}}\Xi)(\xi) = \int_T (S_{\mu_{\Psi}}\Phi_t)(\xi)\,\mathrm{d}\nu(t).
    \]
    We denote $\Xi$ by $\int_T \Phi_t\,\mathrm{d}\nu(t)$ and call it the weak integral of the family $\{\Phi_t, t \in T\}$.
\end{theorem}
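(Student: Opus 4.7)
The plan is to define the candidate $S_{\mu_\Psi}$-transform $F(\xi) := \int_T (S_{\mu_\Psi}\Phi_t)(\xi)\,\mathrm{d}\nu(t)$ for $\xi \in \mathcal{U}$, verify that $F$ belongs to $\mathrm{Hol}_0(\mathcal{N}_\mathbb{C})$, and then invoke Theorem~\ref{thm:StransIsomor} to obtain the unique $\Xi = S_{\mu_\Psi}^{-1}F \in (\mathcal{N})^{-1}_{\mu_\Psi}$, which by construction satisfies the identity in the statement. Thus the entire proof reduces to checking holomorphy near zero of the pointwise integral $F$.

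First I would verify that $F$ is well-defined and locally bounded. Hypothesis 1 makes the integrand measurable, while hypothesis 2 provides the domination $|F(\xi)| \le C$ for every $\xi \in \mathcal{U}$. Shrinking $\mathcal{U}$ if needed, we may assume $\mathcal{U} = U_{l,k} = \{\xi \in \mathcal{N}_\mathbb{C} \mid 2^k |\xi|_l < 1\}$ for some $l,k$, so that $F$ is a bounded function on a concrete neighborhood of zero — this will be the candidate element of $\mathrm{Hol}_0(\mathcal{N}_\mathbb{C})$.

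Second, I would establish G-holomorphy. For fixed $\xi_0 \in \mathcal{U}$ and $\eta \in \mathcal{N}_\mathbb{C}$, choose $\delta > 0$ with $\xi_0 + z\eta \in \mathcal{U}$ for $|z| < \delta$. Each slice $z \mapsto (S_{\mu_\Psi}\Phi_t)(\xi_0 + z\eta)$ is holomorphic (composition of an element of $\mathrm{Hol}_0(\mathcal{N}_\mathbb{C})$ with an affine map), and by hypothesis 2 the family is uniformly bounded in $t$. Fubini is then justified, so for any triangle $\gamma \subset \{|z| < \delta\}$,
\[
\oint_\gamma F(\xi_0 + z\eta)\,\mathrm{d}z = \int_T \oint_\gamma (S_{\mu_\Psi}\Phi_t)(\xi_0 + z\eta)\,\mathrm{d}z\,\mathrm{d}\nu(t) = 0
\]
by Cauchy's theorem applied fiberwise. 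Morera then gives holomorphy of $z \mapsto F(\xi_0 + z\eta)$, i.e.\ $F$ is G-holomorphic on $\mathcal{U}$.

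Third, I would promote G-holomorphy plus local boundedness to holomorphy in the infinite-dimensional sense on $\mathcal{U}$, which places $F \in \mathrm{Hol}_0(\mathcal{N}_\mathbb{C})$. Theorem~\ref{thm:StransIsomor} then delivers a unique $\Xi \in (\mathcal{N})^{-1}_{\mu_\Psi}$ with $S_{\mu_\Psi}\Xi = F$, which is the weak integral. I expect the main subtlety to be precisely this last promotion step: the standard tool (a Dineen-type result that a G-holomorphic and locally bounded function on an open set of a nuclear space is holomorphic, or equivalently, continuous on some $U_{l,k}$) is classical but must be invoked carefully, since the neighborhoods on which one has boundedness and G-holomorphy need to be compatible with the inductive structure $(\mathcal{H}_{l,\mathbb{C}})_{l}$ underlying $\mathrm{Hol}_0(\mathcal{N}_\mathbb{C})$. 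Everything else — measurability, Fubini, Morera — is routine once the $L^1$-domination from hypothesis 2 is in place.
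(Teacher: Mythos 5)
Your proposal follows essentially the same route as the proof this paper points to (Thm.~4.7 in \cite{BCDS24}, which in turn follows the classical argument of \cite{KSWY98}): define the candidate transform $F(\xi)=\int_T (S_{\mu_{\Psi}}\Phi_t)(\xi)\,\mathrm{d}\nu(t)$, check that it is locally bounded and G-holomorphic on a neighborhood $U_{l,k}$, conclude $F\in\mathrm{Hol}_0(\mathcal{N}_{\mathbb{C}})$, and invert the $S_{\mu_{\Psi}}$-transform via Theorem~\ref{thm:StransIsomor}. The architecture and the division of labor between the two hypotheses are exactly right.

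One step needs repair. Hypothesis~2 is an $L^1(\nu)$-bound uniform in $\xi$, not a bound uniform in $t$, so your phrase ``the family is uniformly bounded in $t$'' overstates what is available. This does not hurt the Fubini exchange (Tonelli with the bound $\int_T|(S_{\mu_{\Psi}}\Phi_t)(\xi_0+z\eta)|\,\mathrm{d}\nu(t)\le C$ and joint measurability from hypothesis~1 plus continuity in $z$ suffices), but it does hurt Morera: Morera's theorem requires \emph{continuity} of $z\mapsto F(\xi_0+z\eta)$, and without a pointwise dominating function in $t$ you cannot get this from dominated convergence. The standard fix is to bypass Morera: for $|z|$ small, write each slice by the Cauchy integral formula $u_t(z)=\frac{1}{2\pi \mathrm{i}}\oint_{|w|=\delta'}\frac{u_t(w)}{w-z}\,\mathrm{d}w$, exchange $\oint$ and $\int_T$ (justified by the same $L^1$-bound), and obtain $F(\xi_0+z\eta)$ as a Cauchy-type integral of an $L^1$ boundary function, which is automatically holomorphic in $z$. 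With that substitution, the promotion from G-holomorphy plus local boundedness to membership in $\mathrm{Hol}_0(\mathcal{N}_{\mathbb{C}})$, and hence the application of the characterization theorem, goes through as you describe.
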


\begin{theorem}[See Thm.~2.12~in \cite{JahnII}]\label{thm:StransConverg}
Let $\{\Phi_n\}_{n\in \mathbb{N}}$ be a sequence in $(\mathcal{N})_{\mu_{\Psi}}^{-1}$. Then $\{\Phi_n\}_{n \in \mathbb{N}}$ converges strongly in $(\mathcal{N})_{\mu_{\Psi}}^{-1}$ if and only if there exists $l,k \in \mathbb{N}$ with the following two properties:
\begin{itemize}
\item[i)] $\{(S_{\mu_{\Psi}}\Phi_n) (\xi)\}_{n \in \mathbb{N}}$ is a Cauchy sequence for all $\xi \in U_{l,k}$;
\item[ii)] $(S_{\mu_{\Psi}}\Phi_n)(\cdot)$ is holomorphic on $U_{l,k}$ and there is a constant $C>0$ such that
\[ |(S_{\mu_{\Psi}}\Phi_n) (\xi)|\leq C  \]
for all $\xi \in U_{l,k}$ and for all $n \in \mathbb{N}$.
\end{itemize} 
\end{theorem}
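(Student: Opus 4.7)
The strategy is to transfer the problem from $(\mathcal{N})^{-1}_{\mu_\Psi}$ to the space $\mathrm{Hol}_0(\mathcal{N}_\C)$ via the isomorphism of Theorem~\ref{thm:StransIsomor}, and then exploit the structure of $\mathrm{Hol}_0(\mathcal{N}_\C)$ as an inductive limit of Banach spaces of bounded holomorphic functions. Recall that one standard realization is $\mathrm{Hol}_0(\mathcal{N}_\C)=\bigcup_{l,k\in\mathbb{N}}\mathrm{Hol}_\infty(U_{l,k})$, where $\mathrm{Hol}_\infty(U_{l,k})$ denotes bounded holomorphic functions on $U_{l,k}$ endowed with the sup norm $\|\cdot\|_{l,k,\infty}$, and the topology on $\mathrm{Hol}_0(\mathcal{N}_\C)$ is the inductive limit (Silva-type) topology. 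Since $S_{\mu_\Psi}$ is a topological isomorphism, strong convergence of $\{\Phi_n\}$ in $(\mathcal{N})^{-1}_{\mu_\Psi}$ is equivalent to convergence of $\{S_{\mu_\Psi}\Phi_n\}$ in $\mathrm{Hol}_0(\mathcal{N}_\C)$, so the task reduces to characterizing strong convergence in this inductive limit.

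For the necessity direction, I would argue that if $\{S_{\mu_\Psi}\Phi_n\}$ converges in the inductive limit topology, then by the regularity of the inductive limit there exist $l,k\in\mathbb{N}$ such that the whole sequence lies in $\mathrm{Hol}_\infty(U_{l,k})$ and converges there in norm. In particular the sequence is uniformly bounded on $U_{l,k}$, each term is holomorphic on $U_{l,k}$, and uniform convergence on $U_{l,k}$ forces pointwise Cauchy behavior, which gives properties (i) and (ii).

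For sufficiency, assume (i) and (ii) hold on some $U_{l,k}$. The key analytic input is an infinite-dimensional Vitali-type theorem: a locally bounded family of holomorphic functions on $U_{l,k}$ that is pointwise Cauchy converges uniformly on suitable smaller neighborhoods. Concretely, shrinking from $U_{l,k}$ to $U_{l,k+1}$ (where the admissible radius is halved), I would use Cauchy estimates together with the uniform bound from (ii) to upgrade the pointwise Cauchy condition from (i) to a Cauchy condition in the norm $\|\cdot\|_{l,k+1,\infty}$. This yields a limit $F\in\mathrm{Hol}_\infty(U_{l,k+1})$, and norm convergence in this Banach step implies convergence in the inductive limit. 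Via $S_{\mu_\Psi}^{-1}$ one obtains the strongly convergent limit $\Phi\in(\mathcal{N})^{-1}_{\mu_\Psi}$.

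The main obstacle is the sufficiency step, specifically the Vitali/Cauchy-estimate passage from \emph{pointwise} Cauchy plus \emph{uniform} boundedness on $U_{l,k}$ to \emph{norm} Cauchy on a strictly smaller neighborhood $U_{l,k+1}$. In the infinite-dimensional setting this requires care because one cannot appeal directly to Montel's theorem; instead one exploits that holomorphic functions on $U_{l,k}$ with values in $\mathbb{C}$ admit Taylor expansions whose homogeneous components are controlled by the sup norm through Cauchy's inequality, and then shows equicontinuity of these expansions on $U_{l,k+1}$. Once this analytic lemma (which is where \cite{JahnII} builds on the non-Gaussian characterization theorem of \cite{KSWY98}) is in hand, the rest of the argument is a direct translation through the $S_{\mu_\Psi}$-isomorphism.
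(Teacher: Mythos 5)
The paper does not actually prove this statement: it is imported verbatim as Theorem~2.12 of \cite{JahnII}, so there is no in-paper argument to compare against. Judged against the proof in that source (and the underlying machinery of \cite{KSWY98}), your outline is correct in both directions but takes a somewhat different route in the sufficiency step. The standard argument does not upgrade pointwise Cauchy plus uniform boundedness to sup-norm convergence on a smaller $U_{l,k+1}$ via an infinite-dimensional Vitali theorem; instead it uses the quantitative half of the characterization theorem, which converts the uniform bound $|(S_{\mu_\Psi}\Phi_n)(\xi)|\le C$ on $U_{l,k}$ into a uniform bound $\|\Phi_n\|_{-l',-k'}\le C'$ in a fixed Hilbert space of the Kondratiev chain, then uses pointwise convergence of the $S$-transforms (via polarization of the expansion $(S_{\mu_\Psi}\Phi_n)(\xi)=\sum_m\langle\Phi_n^{(m)},\xi^{\otimes m}\rangle$) to get convergence of the generalized Appell coefficients, and finally concludes norm convergence in a slightly larger Hilbert space $\mathcal{H}_{-l''}$-step by splitting the series into a finite part plus a uniformly small tail. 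That coefficient-based argument buys you an explicit norm estimate and sidesteps the analytic lemma you flag as your ``main obstacle''; your Vitali/Cauchy-estimate route is workable but, as written, leaves precisely that lemma unproven, and it is the only genuinely nontrivial part of the sufficiency direction. Your necessity direction also silently invokes the regularity (sequential retractivity) of the inductive limit to localize a convergent sequence into one Banach step; this holds here because $(\mathcal{N})^{-1}_{\mu_\Psi}$ is a DFN-type space, but it deserves explicit mention.
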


\begin{definition}
For $\Phi \in (\mathcal{N})^{-1}_{\mu_{\Psi}}$ and $\xi \in U_{l,k}$, we define the $T_{\mu_{\Psi}}$-transform by
\[  (T_{\mu_{\Psi}}\Phi)(\xi)=\langle\! \langle \Phi, \e^{\mathrm{i}\langle \cdot, \xi \rangle } \rangle \!\rangle_{\mu_{\Psi}}=\int_{\mathcal{N}'}\e^{\mathrm{i}\langle \omega, \xi\rangle}\Phi(\omega)\, \mathrm{d}\mu_{\Psi}(\omega). \]
\end{definition}
\begin{remark}
\begin{enumerate}
    \item For $\xi=0$, we have $\exp{(\mathrm{i} \langle \cdot, \xi \rangle)}=1$, which implies a relation between $T_{\mu_\Psi}$-transform and the generalized expectation of $\Phi$ 
    \[ 
    (T_{\mu_{\Psi}}\Phi)(0)=\mathbb{E}_{\mu_{\Psi}}(\Phi). 
    \]
    \item As an application, we show that Donsker's delta $\delta(\langle \cdot, f \rangle )$, $f \in \mathcal{H}$, as an element in $(\mathcal{N})^{-1}_{\mu_{\Psi}}$, see Theorem~5.2 in \cite{BCDS24}.
\end{enumerate}
\end{remark}

\subsection{The Radon-Nikodym Derivative in Euclidean Spaces}
In the following, we recall that the Radon-Nikodym derivative of a generalized Wright measure is expressed by a Fox-$H$ density with all moments when the nuclear triple \eqref{eq:nuclear-triple-abstract} is realized by the $d$-dimensional Euclidean space. That is, $\R^d\subset\R^d\subset\R^d$, $d \in \N$.
\begin{theorem}[See Thm~2.14~in \cite{BCDS24}]\label{thm:FiniteDimensionalFoxHGaussianDensity}
    For $d \in \N$, let the assumptions of Lemma~\ref{lem:HFiniteDensities} hold together with $2(b_j + \beta_j)> \beta_jd$, for $j=1,\dots,m$.
    Then the $d$-dimensional generalized Wright measure $\mu_{\Psi}^d$ is absolutely continuous w.r.t.~the $d$-dimensional Lebesgue measure and its Radon-Nikodym density is expressed, for every $x \in \R^d$, by
    \begin{equation}\label{eq:FiniteDimensionalFoxHGaussianDensity}
    \varrho_H^d(x):= \frac{1}{(2 \pi)^{d/2}K} H^{m+1,0}_{p,m+1}\left[ \frac{(x,x)}{2}\,\bigg| \genfrac{}{}{0pt}{}{(a_i+\alpha_i(1-d/2),\alpha_i)_p}{(0,1),(b_{j-1}+\beta_{j-1}(1-d/2),\beta_{j-1})_{2,m+1} }\right]. 
    \end{equation}
\end{theorem}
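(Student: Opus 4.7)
The plan is to identify $\mu_\Psi^d$ as a variance mixture of centered isotropic Gaussians on $\R^d$, invert the characteristic function explicitly, and recognize the resulting one-dimensional integral as a Fox-$H$ function via Mellin–Barnes calculus. Lemma~\ref{lem:ProjeMoments} applied to an orthonormal basis of $\R^d$ identifies the finite-dimensional projection of $\mu_\Psi$, so by Definition~\ref{def:FoxHmeasu} the characteristic function of $\mu_\Psi^d$ at $\xi\in\R^d$ equals $K^{-1}\,{}_m\Psi_p(-(\xi,\xi)/2)$. Combined with the Laplace transform identity~\eqref{eq:LTdensitiesFiniteMoments}, this gives
\[
\int_{\R^d}\e^{\mathrm{i}(x,\xi)}\,\mathrm{d}\mu_\Psi^d(x)
= \int_0^\infty \e^{-\tau(\xi,\xi)/2}\,\varrho_H(\tau)\,\mathrm{d}\tau,
\]
exhibiting $\mu_\Psi^d$ as the $\varrho_H$-mixture of $\mathcal{N}(0,\tau I_d)$ laws. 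Consequently, once Fubini is justified, the Radon--Nikodym density must equal
\[
\varrho_H^d(x) = \int_0^\infty (2\pi\tau)^{-d/2}\,\e^{-|x|^2/(2\tau)}\,\varrho_H(\tau)\,\mathrm{d}\tau.
\]

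Before evaluating the integral, I would verify the integrability behind the mixture at the density level. The exponential decay at $\tau\to\infty$ provided by Corollary~\ref{prop:AsymptotHDensity} dominates any algebraic factor, while near $\tau=0^+$ the singular prefactor $\tau^{-d/2}$ is controlled precisely by the hypothesis $2(b_j+\beta_j)>\beta_j d$ for $j=1,\dots,m$. This is exactly the positivity of the arguments $b_j+\beta_j(1-d/2)$ of every Gamma factor obtained from the negative fractional moment $\int_0^\infty \tau^{-d/2}\varrho_H(\tau)\,\mathrm{d}\tau$ by analytic continuation (in the order parameter $l$) of the moment formula of Lemma~\ref{lem:HFiniteDensities}.

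The main computational step is then to identify the above integral with a Fox-$H$ function. I would insert the Mellin--Barnes representation from Definition~\ref{def:H-function} for $H^{m,0}_{p,m}(\tau|\cdots)$ inside $\varrho_H$ and interchange the order of integration. The inner $\tau$-integral collapses to the classical Gamma identity $\int_0^\infty \tau^{-d/2-s}\e^{-|x|^2/(2\tau)}\,\mathrm{d}\tau=(|x|^2/2)^{1-d/2-s}\,\Gamma(d/2+s-1)$. Performing the contour shift $s\mapsto s-(d/2-1)$ turns the factor $(|x|^2/2)^{1-d/2-s}$ into $(|x|^2/2)^{-s}$, produces an additional Gamma factor $\Gamma(s)$ corresponding to a new parameter pair $(0,1)$ in the denominator, and shifts each $a_i,b_j$ by $\alpha_i(1-d/2)$, respectively $\beta_j(1-d/2)$. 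The resulting integrand coincides, line for line, with the Mellin--Barnes kernel of the Fox-$H$ function appearing on the right-hand side of \eqref{eq:FiniteDimensionalFoxHGaussianDensity}.

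The principal obstacles are bookkeeping rather than conceptual: (i) justifying the Fubini exchange between the Mellin contour $\mathcal{L}$ and the $\tau$-integration, which one handles by choosing $\mathcal{L}=\mathcal{L}_{\mathrm{i}\gamma\infty}$ within a vertical strip that makes both integrals absolutely convergent uniformly in the other variable, using the strip of analyticity granted by $a^*\in(0,1)$; and (ii) checking that after the shift the resulting contour still separates the two families of poles~\eqref{def:polesb}--\eqref{def:polesa} of the enlarged Fox-$H$, so that the right-hand side of \eqref{eq:FiniteDimensionalFoxHGaussianDensity} is itself well defined and coincides with the computed integral. Both items are routine under Assumption~\ref{ass:AllHdensity}, and conclude the proof.
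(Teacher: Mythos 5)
Your argument is correct and is essentially the proof behind the cited result: the paper itself only imports this theorem from \cite{BCDS24}, but its own Property~\ref{property:mixture-fd} records exactly the Gaussian-mixture structure you derive from \eqref{eq:LTdensitiesFiniteMoments}, and your Mellin--Barnes evaluation of $\int_0^\infty(2\pi\tau)^{-d/2}\e^{-|x|^2/(2\tau)}\varrho_H(\tau)\,\mathrm{d}\tau$ (Gamma identity, shift $s\mapsto s+d/2-1$, new pair $(0,1)$) reproduces \eqref{eq:FiniteDimensionalFoxHGaussianDensity} line for line, with the hypothesis $2(b_j+\beta_j)>\beta_j d$ correctly identified as the positivity of the shifted parameters $b_j+\beta_j(1-d/2)$ controlling the singularity at $\tau=0^+$. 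No gaps beyond the routine Fubini/contour checks you already flag.
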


\begin{remark}We would like to emphasize that the density $\varrho_H^d(\cdot)$ in Equation~\eqref{eq:FiniteDimensionalFoxHGaussianDensity} coincides with Equation~(3.23) in \cite{S92} for $b_1=0,\, \beta_1=1,\, a_1=1-\rho$, and $ \alpha_1=\rho$. In this particular case, $\varrho_H^d$ coincides with the density of the $d$-dimensional Mittag-Leffler measure.
\end{remark}

Measures in the class $\mathcal{M}_\Psi(\mathbb{R}^d)$ can be expressed as a mixture of Gaussian measures with a proper mixing family of probability measures on $(0,\infty)$. More precisely, we have

\begin{property}\label{property:mixture-fd}
Let the hypothesis of Theorem~\ref{thm:FiniteDimensionalFoxHGaussianDensity} hold. 
\begin{enumerate}
    \item The density $\varrho_H^d(\cdot)$ is a Gaussian mixture as follows:
    \begin{align*}
      \varrho_H^d(x)&=\frac{1}{(2 \pi)^{d/2}K} H^{m+1,0}_{p,m+1}\left(\frac{(x,x)}{2}\right)\\
      &=\frac{1}{(2 \pi)^{d/2}K}\int_0^\infty  \frac{1}{\tau^{d/2}} \exp\left(-\frac{(x,x)}{2 \tau} \right)  H^{m,0}_{p,m}(\tau)\,\mathrm{d}\tau, \quad x \in \R^d.
      \end{align*}
      \item Equivalently, the measure $\mu_\Psi^d$ is given by the mixture of Gaussian measures
      \[
      \mu_\Psi^d=\int_0^\infty\gamma_s^d\,\mathrm{d}\nu_H(s),
      \] 
      where $\gamma_s^d$, $s>0$, is the Gaussian measure on $\R^d$ with variance $sI$ with $I$ the $d \times d$-identity matrix and $\nu_H$ is the probability measure on $(0,\infty)$ with density $\varrho_H$ from Lemma~\ref{lem:HFiniteDensities}.
      \item The measures $\mu^d_\Psi$, $d\in \N$, are multivariate elliptical distributions with density generator
\[ 
 H^{m+1,0}_{p,m+1}(x)=\int_0^\infty  \frac{1}{\tau^{d/2}} \exp\left(-\frac{(x,x)}{2 \tau} \right)  H^{m,0}_{p,m}(\tau)\,\mathrm{d}\tau, \quad x \in \R^d, 
\] see Remark 2.17 in \cite{BCDS24}.
      \item The generalized Wright measures $\mu_\Psi$ are the mixture of Gaussian measures
      \[
      \mu_\Psi=\int_0^\infty \mu^{(s)}\,\mathrm{d}\nu_H(s),
      \]
      where $\mu^{(s)}$ is the Gaussian measure on $\mathcal{N}'$ with variance $s>0$. 
\end{enumerate}

\end{property}

\begin{remark}
The mixture property of the family of measures shown above has important consequences:
\begin{itemize}
    \item[1.] The class of processes associated with this class of measures consists of time-changed Gaussian processes;
    \item[2.] According to the general theory of Gaussian measures (refer to Corollary~4.4 in \cite{BCDS24}), this class cannot be ergodic, and as a result, neither can the associated stochastic process.
\end{itemize}
\end{remark}

\section{Generalized Fox-\texorpdfstring{$H$}{H} Process} \label{Sec 3: Gene Stoch Proce}
In this section, we introduce generalized stochastic processes as continuous linear mappings on the space $\mathcal{N}$ and $\mathcal{H}$ and study their key properties. If we choose a particular nuclear triple (e.g., the triple of Schwartz test functions and generalized functions) the generalized stochastic processes may be realized as stochastic processes at time $t \ge 0$. To define the latter, we briefly describe the stochastic counterpart of this infinite-dimensional analytical framework.

\subsection{Generalized Stochastic Processes}
In Section~\ref{subsec:gFHm} we have introduced the generalized Wright measures on the measurable space $(\mathcal{N}', \mathcal{C}_{\sigma}(\mathcal{N}'))$ by exploiting Bochner-Minlos' theorem on an abstract nuclear space, namely $\mathcal{N} \subset \mathcal{H} \subset \mathcal{N}'$. Consequently, we can interpret the dual pairing between $\mathcal{N}'$ and $\mathcal{N}$ as a generalized random variable. We denote them by  
\[
\mathcal{N}\ni \varphi\mapsto X(\varphi):=\langle \cdot, \varphi \rangle \in \mathbb{R}. 
\]
It follows from Theorem~\ref{thm:MomentmuHInfiniteDimension} that each generalized random variable $X(\varphi)$ is an element in $L^2(\mu_{\Psi})$. In addition, we can extend $X(\cdot)$ from $\mathcal{N}$ to $\mathcal{H}$ via an approximation procedure through the density of $\mathcal{N}$ in $\mathcal{H}$, see Remark~\ref{rem:ext-dual-pairing}. Thus, for  $f\in\mathcal{H}$ we get a generalized random variable
\[
X(f): (\mathcal{N}', \mathcal{C}_{\sigma}(\mathcal{N}')) \to (\mathbb{R},\mathcal{B}(\mathbb{R})), \, \omega \mapsto\langle \omega, f \rangle, \quad \mu_\Psi\mbox{-a.a.}\;\omega\in\mathcal{N}', 
\]
where $\mathcal{B}(\mathbb{R})$ is the Borel $\sigma$-algebra on $\mathbb{R}$. They share the same properties as those $X(\varphi)$, $\varphi \in \mathcal{N}$. 

To interpret the above generalized random variables as a stochastic process at time $t \geq 0$, we are going to specify the above-cited abstract nuclear triple to be 
\[   \mathcal{S}(\mathbb{R}) \subset L^2(\mathbb{R}) \subset \mathcal{S}(\mathbb{R})', \]
and choose $f \in L^2(\mathbb{R})$ to be a specific function depending on $t$. More specifically, for $t>0$ and $\mathbbm{H} \in (0,1)$, we choose $f(x)=M_{-}^\mathbbm{H} \mathbbm{1}_{[0,t)}(x)$, $x \in \mathbb{R}$, where $M^\mathbbm{H}_{-}$ stands for the Riemann-Liouville fractional derivative (if $\mathbbm{H} \in (0,1/2)$) or the fractional integral (if $\mathbbm{H} \in (1/2,1)$). See the definition in Equation~\eqref{eq:FractionalOperators} for details.

The latter choice allows us to define a stochastic process (see Definition~\ref{def:GenWrighProcess}) and study its properties.

\subsection{Riemann-Liouville Fractional Operators}
In this subsection, we recall the definitions of right-sided and left-sided Riemann-Liouville fractional derivatives, $D_{\pm}^{\sigma}$, and integrals, $ I^{\sigma}_{\pm}$, needed to define the main object of this section. The interested reader can find more details in classical books \cite{SKM93} and \cite{KilSvri}.  

\begin{definition}[cf.~Eq.~(2.3.1)-(2.3.2) in \cite{KilSvri}]\label{def:leftrightRLIntegrals}
    Let $ \sigma \in (0,1)$ and $f \in L^1(\R)$ be given. The left-sided Riemann-Liouville fractional integral of order $\sigma$, denoted by $I^\sigma_{+}$, is defined as follows:

    \begin{equation}\label{eq:leftsidedRLIntegral}
	\left( I^\sigma_{+}f\right)(x) := \frac{1}{\Gamma(\sigma)} \int_{-\infty}^x f(t) (x-t)^{\sigma-1}\,\mathrm{d} t,\quad x\in\R, 
 \end{equation}
 while the right-sided Riemann-Liouville fractional integral of order $\sigma$, $I^{\sigma}_{-}$, as
 \begin{equation}\label{eq:rightsidedRLIntegral}
	\left( I^\sigma_{-}f\right)(x) := \frac{1}{\Gamma(\sigma)} \int_x^\infty f(t) (t-x)^{\sigma-1}\,\mathrm{d} t,\quad x\in\R.
\end{equation}
\end{definition}

\begin{definition}[cf.~Eq.~(2.3.6)-(2.3.7) in \cite{KilSvri}]\label{def:leftrightRrivatives}

Let $ \sigma \in (0,1)$ and $f \in L^1(\R)$  be given. The left-sided Riemann-Liouville fractional derivative of order $\sigma$, denoted by $D^\sigma_{+}$, is defined as follows:

    \begin{equation}\label{eq:leftsidedRivative}
	\left( D^\sigma_{+}f\right)(x) = \frac{1}{\Gamma(1-\sigma)} \frac{\mathrm{d}}{\mathrm{d} x} \int_{-\infty}^x f(t) (x-t)^{-\sigma}\,\mathrm{d} t,\quad x\in\R, 
 \end{equation}
 while the the right-sided Riemann-Liouville fractional derivative of order $\sigma$, $D^{\sigma}_{-}$, as
 \begin{equation}\label{eq:rightsidedRLDerivative}
	\left( D^\sigma_{-}f\right)(x) = -\frac{1}{\Gamma(1-\sigma)} \frac{\mathrm{d}}{\mathrm{d} x} \int_x^\infty f(t) (t-x)^{-\sigma}\,\mathrm{d} t,\quad x\in\R. 
\end{equation}
\end{definition}
In the following, we adopt the convention $t_+^\mathfrak{a}:=  t^\mathfrak{a}$, if $t>0$ and zero otherwise, to express the fractional operators on indicator functions. For $\sigma \in (0,1)$, we have
\begin{equation}
	\left( I^\sigma_{-}\mathds{1}_{(a,b)}\right) (x) = \frac{1}{\Gamma(\sigma+1)}\left( (b-x)_{+}^\sigma  - (a-x)^\sigma_{+} \right).
\end{equation}
and 
\[
	\left( D^\sigma_{-}\mathds{1}_{(a,b)}\right) (x) = \frac{1}{\Gamma(1-\sigma)}\left( (b-x)_{+}^{-\sigma} - (a-x)^{-\sigma}_{+} \right).
\]
For $\mathbbm{H} \in (0,1)$, we define the operator $M^{\mathbbm{H}}_{\pm}$ by	
\begin{equation}\label{eq:FractionalOperators}
	M^{\mathbbm{H}}_{\pm} f := \begin{cases}
		K_\mathbbm{H} D^{-(\mathbbm{H}-1/2)}_{\pm}f, & \mathbbm{H}\in (0,1/2), \\
		f, & \mathbbm{H}=1/2, \\
		K_\mathbbm{H} I^{\mathbbm{H}-1/2}_{\pm}f, & \mathbbm{H}\in (1/2,1), 
	\end{cases} 
\end{equation}
where the constant is
\[	
K_\mathbbm{H}:=\Gamma\left( \mathbbm{H}+1/2 \right)\left( \int_0^\infty \left((1+s)^{\mathbbm{H}-1/2}-s^{\mathbbm{H}-1/2} \right)^2\,\mathrm{d} s+\frac{1}{2\mathbbm{H}}\right)^{-1/2}.
\]
We point out the following facts in order to consistently define the stochastic processes and study their properties.
\begin{lemma}[]
Let $\mathbbm{H} \in (0,1)$, then the following properties hold:

\begin{enumerate}
    \item for $a,b \in \mathbb{R}$, $M^\mathbbm{H}_\pm \mathds{1}_{[a,b)}\in L^2(\R,\mathrm{d} x);$
    \item for $t_1, t_2 \geq 0$ we have 
    \begin{equation}\label{eq: fBm Covariance}
        (M^{\mathbbm{H}}_{-} \mathds{1}_{[0,t_1)}, M^{\mathbbm{H}}_{-} \mathds{1}_{[0,t_2)})_{L^2(\R,\mathrm{d}x)}=\frac{1}{2}(t_1^{2\mathbbm{H}} + t_2^{2\mathbbm{H}} -|t_1-t_2|^{2\mathbbm{H}}).
    \end{equation}
\end{enumerate}
\end{lemma}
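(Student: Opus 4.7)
The plan is to first derive an explicit unified formula for $M^\mathbbm{H}_-\mathds{1}_{[a,b)}$ valid in all three cases $\mathbbm{H}\in(0,1/2)$, $\mathbbm{H}=1/2$, and $\mathbbm{H}\in(1/2,1)$. Applying Definitions~\ref{def:leftrightRLIntegrals}--\ref{def:leftrightRrivatives} to the indicator $\mathds{1}_{[a,b)}$, and using the gamma identities $\Gamma((\mathbbm{H}-1/2)+1)=\Gamma(\mathbbm{H}+1/2)=\Gamma(1-(1/2-\mathbbm{H}))$, one obtains in all cases
\begin{equation*}
M^\mathbbm{H}_-\mathds{1}_{[a,b)}(x)=\frac{K_\mathbbm{H}}{\Gamma(\mathbbm{H}+1/2)}\bigl((b-x)_+^{\mathbbm{H}-1/2}-(a-x)_+^{\mathbbm{H}-1/2}\bigr),\qquad x\in\R,
\end{equation*}
together with a symmetric formula for $M^\mathbbm{H}_+$. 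This reduces both assertions to analysing a concrete piecewise-power function of Mandelbrot--Van Ness type.

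For part (1), I would split $\R$ into $(-\infty,a)$, $[a,b)$, $[b,\infty)$. On $[b,\infty)$ the function vanishes identically. On $[a,b)$ only the first term contributes the possibly singular factor $(b-x)^{2\mathbbm{H}-1}$, whose integral near $x=b$ is finite because $2\mathbbm{H}-1>-1$. On $(-\infty,a)$, a Taylor expansion at $y=-x\to+\infty$ gives $(b+y)^{\mathbbm{H}-1/2}-(a+y)^{\mathbbm{H}-1/2}=(\mathbbm{H}-1/2)(b-a)\,y^{\mathbbm{H}-3/2}+O(y^{\mathbbm{H}-5/2})$, whose square is integrable because $2\mathbbm{H}-3<-1$ for $\mathbbm{H}<1$. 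The $M^\mathbbm{H}_+$ case follows from the analogous representation.

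For part (2) I would use polarization anchored on a single norm identity. Computing $\|M^\mathbbm{H}_-\mathds{1}_{[0,t)}\|^2_{L^2}$ from the unified formula by splitting the real line at $x=0$ and $x=t$, and rescaling the integral over $(-\infty,0)$ via $s=-x/t$, gives
\begin{equation*}
\|M^\mathbbm{H}_-\mathds{1}_{[0,t)}\|^2_{L^2}=\frac{K_\mathbbm{H}^2\,t^{2\mathbbm{H}}}{\Gamma(\mathbbm{H}+1/2)^2}\left(\int_0^\infty\!\bigl((1+s)^{\mathbbm{H}-1/2}-s^{\mathbbm{H}-1/2}\bigr)^2\mathrm{d} s+\frac{1}{2\mathbbm{H}}\right)=t^{2\mathbbm{H}},
\end{equation*}
the last equality being exactly the defining relation for $K_\mathbbm{H}$ in Equation~\eqref{eq:FractionalOperators}. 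Since the unified formula depends on $a,b$ only through differences, the same calculation yields $\|M^\mathbbm{H}_-\mathds{1}_{[t_1,t_2)}\|^2_{L^2}=(t_2-t_1)^{2\mathbbm{H}}$ for $0\le t_1\le t_2$. Writing $\mathds{1}_{[0,t_2)}=\mathds{1}_{[0,t_1)}+\mathds{1}_{[t_1,t_2)}$, expanding $t_2^{2\mathbbm{H}}=\|M^\mathbbm{H}_-\mathds{1}_{[0,t_2)}\|^2_{L^2}$, and solving for the cross term $(M^\mathbbm{H}_-\mathds{1}_{[0,t_1)},M^\mathbbm{H}_-\mathds{1}_{[t_1,t_2)})$ yields the claimed covariance after a single algebraic rearrangement.

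The main obstacle I anticipate is the bookkeeping underlying the unified formula: one must carefully verify that the derivative case $\mathbbm{H}\in(0,1/2)$, where $D^{1/2-\mathbbm{H}}_-$ produces the prefactor $\Gamma(1-(1/2-\mathbbm{H}))^{-1}$, and the integral case $\mathbbm{H}\in(1/2,1)$, where $I^{\mathbbm{H}-1/2}_-$ produces $\Gamma((\mathbbm{H}-1/2)+1)^{-1}$, collapse to the same normalization, so that the constant $K_\mathbbm{H}$ cancels the $L^2$-integral exactly. Once this is settled, the remaining arguments are essentially classical computations on the Mandelbrot--Van Ness kernel and introduce no further difficulty.
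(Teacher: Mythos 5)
Your proposal is correct. The paper itself gives no proof of this lemma — it simply defers to \cite{Mishura08} — and your argument is precisely the standard Mandelbrot--Van Ness computation one finds there: the unified kernel $\frac{K_\mathbbm{H}}{\Gamma(\mathbbm{H}+1/2)}\bigl((b-x)_+^{\mathbbm{H}-1/2}-(a-x)_+^{\mathbbm{H}-1/2}\bigr)$ is right in all three regimes (the gamma normalizations $\Gamma(1-(1/2-\mathbbm{H}))$ and $\Gamma((\mathbbm{H}-1/2)+1)$ do coincide, and $K_{1/2}=1$ makes the case $\mathbbm{H}=1/2$ consistent), the scaling $s=-x/t$ reproduces exactly the defining normalization of $K_\mathbbm{H}$, and the polarization via $\mathds{1}_{[0,t_2)}=\mathds{1}_{[0,t_1)}+\mathds{1}_{[t_1,t_2)}$ together with translation invariance of the kernel yields the covariance. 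The only cosmetic gap is in part (1) on $(-\infty,a)$: besides the decay at $-\infty$ you should also note the local singularity of $(a-x)_+^{\mathbbm{H}-1/2}$ at $x=a^-$ when $\mathbbm{H}<1/2$, which is square-integrable for the same reason as the one at $x=b$, namely $2\mathbbm{H}-1>-1$.
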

We refer to \cite{Mishura08} for the proof and further details.

\subsection{The Generalized Fox-\texorpdfstring{$H$}{H} Process. Definition and Properties}
In this subsequent, we give the definition of generalized Fox-$H$ processes and study their moments, covariance structures, their increments, and their characteristic function. The notation given to define the gFHp allows to explicit the dependence on the parameters for the representation as a fBm time-changed later in Proposition~\ref{Prop:SubordRepresen}, defining $A=((a_i,\alpha_i))_{1,p}$ and $B=((b_j,\beta_j))_{1,m}$.
\begin{definition}\label{def:GenWrighProcess}
Let  the assumptions of Lemma \ref{lem:HFiniteDensities} hold, and $(\mathcal{S}', \mathcal{C}_{\sigma}(\mathcal{S}'), \mu_{\Psi})$ be the related generalized Wright probability space.\\ For $\mathbbm{H} \in (0,1)$, we define the generalized Fox-$H$ process as the collection $\{ X^{\mathbbm{H}, A, B}_t,t \geq 0 \}$, where
\begin{eqnarray*}X^{\mathbbm{H}, A, B}_t: (\mathcal{S}', \mathcal{C}_{\sigma}(\mathcal{S}')) &\to& (\mathbb{R}, \mathcal{B}(\mathbb{R}))\\
\omega &\mapsto& X^{\mathbbm{H}, A, B}_t(\omega):= \langle\omega,M^{\mathbbm{H}}_{-}\mathds{1}_{[0,t)}\rangle \quad  \mu_{\Psi}\mbox{-}\text{a.a.} \;  \omega \in \mathcal{S}' \end{eqnarray*}
where $M^{\mathbbm{H}}_{-}$ is the operator defined in Equation~\eqref{eq:FractionalOperators}.
\end{definition}

We can characterize the generalized Fox-$H$ processes by employing Theorem~\ref{thm:MomentmuHInfiniteDimension} in conjunction with Theorem~\ref{thm:FiniteDimensionalFoxHGaussianDensity} as follows.

\begin{proposition}\label{prop:CFDensityMomentsCovariance}
    Let the assumptions of Lemma~\ref{lem:HFiniteDensities} hold, $\mathbbm{H} \in (0,1)$ be given and $K$ be as in Equation~\eqref{eq:norm-constant}. Then the generalized Fox-$H$ processes have the following properties:
    \begin{enumerate}
        \item For $n \in \mathbb{N}$ and $0<t_1<\dots<t_n$, the characteristic function of $X^{\mathbbm{H},A,B}:=(X^{\mathbbm{H},A,B}_{t_1}
        ,\dots,X^{\mathbbm{H},A,B}_{t_n})$ equals 
        \[  
        \mathds{E}\big[\mathrm{e}^{\mathrm{i}(\lambda, X^{\mathbbm{H}, A, B}) }\big]=\frac{1}{K}\, _m\!\Psi_p\left[ \genfrac{}{}{0pt}{}{(b_i+\beta_i,\beta_i)_{1,m}}{(a_j+\alpha_j,\alpha_j)_{1,p}} \middle| -\frac{ (\lambda, \Sigma_{\mathbbm{H},n} \lambda) }{2} \right], \quad \lambda \in \mathbb{R}^n.
        \]
        where the matrix $\Sigma_{\mathbbm{H},n}=(\sigma_{l,k})^n_{l,k=1}$ is determined by 
        \[
        \sigma_{l,k}=\frac{1}{2}\bigl(|t_l|^{2\mathbbm{H}}+|t_k|^{2\mathbbm{H}}-|t_l-t_k|^{2\mathbbm{H}}\bigr).
        \]
        \item Let $\varrho_{B^{\mathbbm{H}}}(x)$, $x \in \mathbb{R}^n$, be the joint probability density function of fractional Brownian motion $B^\mathbbm{H}=(B^\mathbbm{H}_{t_1}
        ,\dots,B^\mathbbm{H}_{t_n})$ with Hurst parameter $\mathbbm{H}$, that is,
        \[ 
        \varrho_{B^{\mathbbm{H}}}(x)=\frac{1}{(2 \pi)^{n/2} \det(\Sigma_{\mathbbm{H},n})^{1/2}} \exp\left(-\frac{(x,\Sigma_{\mathbbm{H},n}^{-1}x)}{2} \right).  
        \]
        Then the joint probability density function of $X^{\mathbbm{H},A,B}=(X^{\mathbbm{H},A,B}_{t_1}
        ,\dots,X^{\mathbbm{H},A,B}_{t_n})$ is given by
        \begin{equation}\label{eq:WrighProcessDensity}
           \varrho_{X^{\mathbbm{H}, A, B}}(x)=\frac{1}{K}\int_0^\infty  \frac{1}{\tau^{n/2}} \varrho_{B^{\mathbbm{H}}}\left(\frac{x}{\sqrt{\tau}}\right)  H^{m,0}_{p,m}(\tau)\,\mathrm{d}\tau, \quad x \in \mathbb{R}^n.
        \end{equation}
        \item For each $t\geq0$, the moments of any order of the generalized Fox-$H$ process are given by 
\[
\begin{cases}
\mathbb{E}\big[(X^{\mathbbm{H},A,B}_
t)^{2n+1}\big] & =0,\\[.2cm]
\mathbb{E}\big[(X^{\mathbbm{H},A,B}_t)^{2n}\big] & =\frac{1}{K}\frac{\prod_{i=1}^m \Gamma(b_i + \beta_i (n+1))}{\prod_{i=1}^p \Gamma(a_i + \alpha_i (n+1))}t^{2n\mathbbm{H}}.
\end{cases}
\]
\item The covariance function has the form 
\begin{equation*}
\mathbb{E}\big[X^{\mathbbm{H},A,B}_t X^{\mathbbm{H},A,B}_s\big]=\frac{1}{2K}\frac{\prod_{i=1}^m \Gamma(b_i + 2\beta_i )}{\prod_{i=1}^p \Gamma(a_i + 2\alpha_i) }\big(t^{2\mathbbm{H}}+s^{2\mathbbm{H}}-|t-s|^{2\mathbbm{H}}\big),\quad t,s\geq0.
\end{equation*}
\end{enumerate}
\end{proposition}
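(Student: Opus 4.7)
The overall strategy is to reduce each of the four statements to the defining identity \eqref{eq:FoxHmeasure} of $\mu_\Psi$ together with the mixed moment formulas of Theorem~\ref{thm:MomentmuHInfiniteDimension}, both evaluated at the test function $M^{\mathbbm{H}}_{-}\mathds{1}_{[0,t)}$ (extended from $\mathcal{N}=\mathcal{S}(\mathbb{R})$ to $\mathcal{H}=L^2(\mathbb{R})$ via Remark~\ref{rem:ext-dual-pairing}). The common thread is that $X^{\mathbbm{H},A,B}_t=\langle\cdot,M^{\mathbbm{H}}_{-}\mathds{1}_{[0,t)}\rangle$, so every linear combination is again a pairing and every relevant inner product reduces to the fBm covariance identity \eqref{eq: fBm Covariance}.

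For part~(1) I would set
$$\varphi_\lambda:=\sum_{k=1}^n\lambda_k M^{\mathbbm{H}}_{-}\mathds{1}_{[0,t_k)}\in L^2(\mathbb{R}),$$
note that $(\lambda,X^{\mathbbm{H},A,B})=\langle\omega,\varphi_\lambda\rangle$ by linearity, and plug into the $\mathcal{H}$-extension of \eqref{eq:FoxHmeasure}. The explicit form of $\Sigma_{\mathbbm{H},n}$ then arises from
$$\langle\varphi_\lambda,\varphi_\lambda\rangle=\sum_{l,k=1}^n\lambda_l\lambda_k\bigl(M^{\mathbbm{H}}_{-}\mathds{1}_{[0,t_l)},M^{\mathbbm{H}}_{-}\mathds{1}_{[0,t_k)}\bigr)_{L^2(\mathbb{R})}=(\lambda,\Sigma_{\mathbbm{H},n}\lambda),$$
where the second equality is \eqref{eq: fBm Covariance}.

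For part~(2) I would exploit the Laplace representation \eqref{eq:LTdensitiesFiniteMoments}: evaluating it at $s=(\lambda,\Sigma_{\mathbbm{H},n}\lambda)/2$ and combining with the CF obtained in (1) gives
$$\mathbb{E}\bigl[\mathrm{e}^{\mathrm{i}(\lambda,X^{\mathbbm{H},A,B})}\bigr]=\int_0^\infty\mathrm{e}^{-\tau(\lambda,\Sigma_{\mathbbm{H},n}\lambda)/2}\,\frac{H^{m,0}_{p,m}(\tau)}{K}\,\mathrm{d}\tau,$$
i.e.\ the CF of a Gaussian mixture with covariance $\tau\Sigma_{\mathbbm{H},n}$ and mixing density given by the F$H$dam (exactly the infinite-dimensional analogue of Property~\ref{property:mixture-fd}). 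Fourier inversion combined with Fubini (justified by the finite moments of $\varrho_H$ and the integrability of the Gaussian CF) yields \eqref{eq:WrighProcessDensity}, once one observes that $\tau^{-n/2}\varrho_{B^{\mathbbm{H}}}(x/\sqrt{\tau})$ is precisely the density of $N(0,\tau\Sigma_{\mathbbm{H},n})$.

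For parts~(3) and~(4) I would apply Theorem~\ref{thm:MomentmuHInfiniteDimension} directly with $\varphi=M^{\mathbbm{H}}_{-}\mathds{1}_{[0,t)}$ (and $\psi=M^{\mathbbm{H}}_{-}\mathds{1}_{[0,s)}$ in (4)); identity \eqref{eq: fBm Covariance} gives $\langle\varphi,\varphi\rangle=t^{2\mathbbm{H}}$ and $\langle\varphi,\psi\rangle=\tfrac{1}{2}\bigl(t^{2\mathbbm{H}}+s^{2\mathbbm{H}}-|t-s|^{2\mathbbm{H}}\bigr)$, and both moment formulas follow by substitution (odd moments vanish by the theorem). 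The main technical point of the whole argument is the extension step: \eqref{eq:FoxHmeasure} and Theorem~\ref{thm:MomentmuHInfiniteDimension} are stated for $\varphi\in\mathcal{N}$, whereas $M^{\mathbbm{H}}_{-}\mathds{1}_{[0,t)}\in L^2(\mathbb{R})\setminus\mathcal{S}(\mathbb{R})$. This is handled by choosing a Schwartz approximating sequence, invoking the $L^2(\mu_\Psi)$-convergence of the pairings from Remark~\ref{rem:ext-dual-pairing} to pass to an a.s.-convergent subsequence, and then passing to the limit in the CF and in the moment identities using $|\mathrm{e}^{\mathrm{i}x}|\leq 1$, dominated convergence, and continuity of the gWf $_m\Psi_p(\cdot)$ on $\mathbb{R}$.
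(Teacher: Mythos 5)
Your proposal is correct and follows essentially the same route as the paper: reduce everything to the pairing $X^{\mathbbm{H},A,B}_t=\langle\cdot,M^{\mathbbm{H}}_{-}\mathds{1}_{[0,t)}\rangle$, apply the characteristic functional \eqref{eq:FoxHmeasure} and Theorem~\ref{thm:MomentmuHInfiniteDimension} extended to $L^2(\mathbb{R})$ via Remark~\ref{rem:ext-dual-pairing}, use the fBm covariance identity \eqref{eq: fBm Covariance} to produce $\Sigma_{\mathbbm{H},n}$, and obtain the density from the Laplace representation \eqref{eq:LTdensitiesFiniteMoments}. You are merely more explicit than the paper about the Schwartz approximation and the Fourier inversion/Fubini step in part~(2), which is a welcome addition rather than a deviation.
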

\begin{proof}
    \begin{enumerate}
        \item Let $n \in \mathbb{N}$, $0<t_1<\dots<t_n$ and $\lambda \in \mathbb{R}^n$. Define $f(\cdot):=\sum_{i=1}^n \lambda_i M^{\mathbbm{H}}_{-}\mathds{1}_{[0,t_i)}(\cdot) \in L^2(\mathrm{d}x)$, we have \[(\lambda, X^{\mathbbm{H},A,B})= \sum_{i=1}^n \lambda_i X^{\mathbbm{H},A,B}_{t_i}=\sum_{i=1}^n \lambda_i\langle\cdot,   M^{\mathbbm{H}}_{-}\mathds{1}_{[0,t_i)}\rangle=\langle\cdot, f\rangle.\]  Hence,
        \begin{eqnarray*}
        \mathds{E}\big(\mathrm{e}^{\mathrm{i}(\lambda, X^{\mathbbm{H}, A, B}) }\big)&=&\mathds{E}_{\mu_{\Psi}}\big(\mathrm{e}^{\mathrm{i}\langle\omega, f\rangle} \big) \\
        &=& \frac{1}{K}\, _m\!\Psi_p\left[ \genfrac{}{}{0pt}{}{(b_i+\beta_i,\beta_i)_{1,m}}{(a_j+\alpha_j,\alpha_j)_{1,p}} \middle| -\frac{ \langle f, f \rangle }{2} \right].
        \end{eqnarray*}
    An easy computation show that $\langle f, f \rangle = (\lambda, \Sigma_{\mathbbm{H},n} \lambda) $, we obtain the claim. 
    \item We obtain the density of generalized Fox-$H$ processes by considering the LT representation of $_m\Psi_p(-\cdot)$ in Equation~\eqref{eq:LTdensitiesFiniteMoments} and the characteristic function given in point 1 above.
    \item The result follows by applying Theorem~\ref{thm:MomentmuHInfiniteDimension} on a subsequence converging to $f(\cdot)=M^{\mathbbm{H}}_{-}\mathds{1}_{[0,t)}(\cdot) \in L^2(\mathrm{d}x)$, see also Remark~\ref{rem:ext-dual-pairing}. 
    \item We get the claimed covariance by considering the point 1 above, Theorem~\ref{thm:MomentmuHInfiniteDimension} and Remark~\ref{rem:ext-dual-pairing} as in the above point 3.\qedhere
    \end{enumerate}
\end{proof}

\begin{proposition}
    Let the assumptions of Lemma~\ref{lem:HFiniteDensities} hold, $\mathbbm{H} \in (0,1)$ be given and $K$ be as in Equation~\eqref{eq:norm-constant}. Then 
    \begin{enumerate}
        \item For each $t,s\geq0$, the characteristic function of the increments is 
                \begin{equation*}
                    \mathbb{E}\big[e^{i\lambda(X^{\mathbbm{H},A,B}_t-X^{\mathbbm{H},A,B}_s)}\big]=\frac{1}{K}\,_m\Psi_p\left(-\frac{\lambda^{2}}{2}|t-s|^{2\mathbbm{H}}\right),\quad\lambda\in\mathbb{R}.
                \end{equation*}
        \item The process $X^{\mathbbm{H},A,B}_t$ is $\mathbbm{H}$-self-similar with stationary increments. 
        \item For all $n \in \mathbb{N}$ there exists $C_1>0$ such that
        \begin{equation}\label{eq:ContinuousVersion}  \mathbb{E}_{\mu_{\Psi}} \left(|X^{\mathbbm{H},A,B}_t-X^{\mathbbm{H},A,B}_s|^{2n} \right)= C_1 |t-s|^{n 2\mathbbm{H}}. \end{equation}
        Hence, there exists a version of the process with $\gamma $-H\"{o}lder continuous sample paths, for $0<\gamma <\mathbbm{H}-1/2n$, provided that $n \in \mathbb{N}$ satisfies $2n\mathbbm{H}> 1$.
    \end{enumerate}
\end{proposition}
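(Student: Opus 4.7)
The plan is to derive all three statements from part~1 of Proposition~\ref{prop:CFDensityMomentsCovariance}, combined with the fBm covariance identity~\eqref{eq: fBm Covariance} and the moment formula of Theorem~\ref{thm:MomentmuHInfiniteDimension}. Throughout, the extension of the dual pairing recalled in Remark~\ref{rem:ext-dual-pairing}, applied to the test functions $M^{\mathbbm{H}}_{-}\mathbbm{1}_{[a,b)}\in L^{2}(\mathbb{R})$, guarantees that every linear combination $\sum_j \lambda_j X^{\mathbbm{H},A,B}_{t_j}$ is a well-defined $L^{2}(\mu_\Psi)$-random variable, so each step reduces to an inner-product computation in $L^{2}(\mathbb{R})$.

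For part~(1), I would specialise part~1 of Proposition~\ref{prop:CFDensityMomentsCovariance} to $n=2$, $(t_1,t_2)=(t,s)$ and $(\lambda_1,\lambda_2)=(\lambda,-\lambda)$, so that $\lambda_1 X^{\mathbbm{H},A,B}_{t_1}+\lambda_2 X^{\mathbbm{H},A,B}_{t_2}=\lambda\bigl(X^{\mathbbm{H},A,B}_t-X^{\mathbbm{H},A,B}_s\bigr)$. A direct simplification using the explicit entries of $\Sigma_{\mathbbm{H},2}$ gives
\[
(\lambda,\Sigma_{\mathbbm{H},2}\lambda)=\lambda^{2}\bigl(t^{2\mathbbm{H}}+s^{2\mathbbm{H}}-2\sigma_{12}\bigr)=\lambda^{2}|t-s|^{2\mathbbm{H}},
\]
and the claimed characteristic function of the increment follows immediately.

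For part~(2), $\mathbbm{H}$-self-similarity is read off from the same characteristic function: substituting $t_j\mapsto ct_j$ rescales $\Sigma_{\mathbbm{H},n}$ by $c^{2\mathbbm{H}}$, so the characteristic function of $(X^{\mathbbm{H},A,B}_{ct_1},\dots,X^{\mathbbm{H},A,B}_{ct_n})$ at $\lambda$ coincides with that of $(c^{\mathbbm{H}}X^{\mathbbm{H},A,B}_{t_1},\dots,c^{\mathbbm{H}}X^{\mathbbm{H},A,B}_{t_n})$, which gives equality of finite-dimensional distributions. For stationary increments, I would compute the joint characteristic function of $(X^{\mathbbm{H},A,B}_{t_j+h}-X^{\mathbbm{H},A,B}_{h})_{j=1}^{n}$ in the same way, using the linearity relation $M^{\mathbbm{H}}_-\mathbbm{1}_{[0,t_j+h)}-M^{\mathbbm{H}}_-\mathbbm{1}_{[0,h)}=M^{\mathbbm{H}}_-\mathbbm{1}_{[h,t_j+h)}$ together with the polarised form of~\eqref{eq: fBm Covariance} (i.e.\ the stationarity of fBm increments) to rewrite $\langle M^{\mathbbm{H}}_-\mathbbm{1}_{[h,t_j+h)},M^{\mathbbm{H}}_-\mathbbm{1}_{[h,t_k+h)}\rangle_{L^2(\mathbb{R})}$ as the corresponding quantity with $h=0$; this shows that the joint characteristic function does not depend on $h$.

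For part~(3), I would apply Theorem~\ref{thm:MomentmuHInfiniteDimension} to $\varphi=M^{\mathbbm{H}}_-(\mathbbm{1}_{[0,t)}-\mathbbm{1}_{[0,s)})$. Since $|\varphi|_{0}^{2}=|t-s|^{2\mathbbm{H}}$ by~\eqref{eq: fBm Covariance}, the even moment formula yields $\mathbb{E}_{\mu_\Psi}\bigl(|X^{\mathbbm{H},A,B}_t-X^{\mathbbm{H},A,B}_s|^{2n}\bigr)=C_{1}|t-s|^{2n\mathbbm{H}}$ with
\[
C_{1}=\frac{1}{K}\,\frac{\prod_{i=1}^{m}\Gamma(b_{i}+\beta_{i}(n+1))}{\prod_{i=1}^{p}\Gamma(a_{i}+\alpha_{i}(n+1))}\,\frac{(2n)!}{n!\,2^{n}}.
\]
Kolmogorov's continuity theorem then produces a modification with $\gamma$-Hölder sample paths for every $\gamma<(2n\mathbbm{H}-1)/(2n)=\mathbbm{H}-1/(2n)$, provided $n$ is chosen so that $2n\mathbbm{H}>1$. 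No substantial obstacle is anticipated: the computations are a clean assembly of results already established, the only bookkeeping point being the use of Remark~\ref{rem:ext-dual-pairing} to legitimise working with $\langle\cdot,M^{\mathbbm{H}}_-\mathbbm{1}_{[a,b)}\rangle$ in $L^{2}(\mu_\Psi)$.
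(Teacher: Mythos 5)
Your proposal is correct and follows essentially the same route as the paper: reduce every claim to an $L^2(\mathbb{R})$ inner-product computation via the fBm covariance identity \eqref{eq: fBm Covariance}, then invoke the characteristic-function formula of Proposition~\ref{prop:CFDensityMomentsCovariance}-1, the even-moment formula, and Kolmogorov's continuity theorem. If anything, your treatment of part~(2) is slightly more careful than the paper's, since you verify self-similarity and stationarity of increments at the level of finite-dimensional distributions rather than only for one-dimensional marginals.
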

\begin{proof}
    \begin{enumerate}
        \item It is straightforward by considering that, for $0<s<t$, $X^{\mathbbm{H},A,B}_t-X^{\mathbbm{H},A,B}_s=\langle \cdot, M^{\mathbbm{H}}_{-} \mathds{1}_{[0,t)} \rangle - \langle \cdot, M^{\mathbbm{H}}_{-} \mathds{1}_{[0,s)} \rangle=\langle \cdot, M^{\mathbbm{H}}_{-} \mathds{1}_{[s,t)} \rangle$ and $|M^{\mathbbm{H}}_{-} \mathds{1}_{[s,t)} |^2_0=|t-s|^{2\mathbbm{H}}$, by Equation~\eqref{eq: fBm Covariance}.
        \item The generalized Fox-$H$ processes are self-similar with  parameter $\mathbbm{H}$ since  $t^{\mathbbm{H}}X^{\mathbbm{H},A,B}_1$ and $X^{\mathbbm{H},A,B}_t$ have the same characteristic function. Similarly, $X^{\mathbbm{H},A,B}_{t-s}$ and $X^{\mathbbm{H},A,B}_t -X^{\mathbbm{H},A,B}_s$ share the same characteristic function, considering that $|M^{\mathbbm{H}}_{-} \mathds{1}_{[s,t)} |^2_0=| M^{\mathbbm{H}}_{-} \mathds{1}_{[0,t-s)} |^2_0$, see Equation~\eqref{eq: fBm Covariance}.
        \item In order to state Equation~\eqref{eq:ContinuousVersion} is enough considering the even moments in Proposition~\ref{prop:CFDensityMomentsCovariance}-3 and the Kolmogorov continuity theorem.\qedhere
    \end{enumerate}\end{proof}

\begin{remark}\label{rem:Cases}
The class of generalized Fox-$H$ processes cover a wide variety of well-known processes. Here we give some examples:
\begin{enumerate}
    \item We obtain Brownian motion for $p=0,m=0$ and $\mathbbm{H}=1/2$, the characteristic function of such gFHp reduce to 
    \[  \mathbb{E}[\mathrm{e}^{\mathrm{i}(\lambda,X^{\mathbbm{H}, A, B})}]= \, _0 \Psi_0\left(-\frac{\langle f,f \rangle}{2}\right)\overset{*}{=}\mathrm{e}^{-\frac{\sum_{j,i=1}^n \lambda_i  \lambda_j (t_j \wedge t_i)}{2}} \]
    where in $*$ we used (C0) of Corollary 9 of \cite{BCDS23} and $f$ defined as in the proof of Proposition~\ref{prop:CFDensityMomentsCovariance}.
    \item Fractional Brownian motion with Hurst parameter $\mathbbm{H} \in (0,1)$ corres\-ponds to the generalized Fox-$H$ processes for $p=0,m=0$;

    \item The grey Brownian motion $B^{\beta}, \beta \in (0,1)$ is given by choosing $m=p=1$ and $(b_1,\beta_1)=(0,1)$, $(a_1,\alpha_1)=(1-\beta, \beta)$ and $\mathbbm{H}=1/2$;

    \item We find the generalized grey Brownian motion (hereinafter ggBm) $B^{\alpha, \beta}, \alpha \in (0,2), \beta \in (0,1)$ is given by choosing $m=p=1$ and $(b_1,\beta_1)=(0,1)$, $(a_1,\alpha_1)=(1-\beta, \beta)$ and $\mathbbm{H}=\alpha/2$, thus 
    \begin{eqnarray*} \mathbb{E}[\mathrm{e}^{\mathrm{i}(\lambda,X^{\mathbbm{H}, A, B})}]&=&\,\frac{1}{K}\, _1\!\Psi_1\left[ \genfrac{}{}{0pt}{}{(1,1)}{(1,\beta)} \middle| -\frac{ \langle f, f \rangle }{2} \right]\\
    &\overset{*}{=}&E_\beta\left(-\frac{1}{2}\sum_{i,j=1}^n \lambda_i \lambda_j \langle M^{\mathbbm{H}}_{-} \mathds{1}_{[0,t_i)}, M^{\mathbbm{H}}_{-} \mathds{1}_{[0,t_j)} \rangle \right) \end{eqnarray*}
    where in $*$ we used Example 5.5 of \cite{BCDS23} and $f(\cdot)$ as before.
    
    \item It is evident that, when $m$ and $p$ are fixed, gFHp is determined solely by its covariance structure. Therefore, $X^{\mathbb{H},A,B}$ serves as an example of a stochastic process defined purely by its first and second moments, which is characteristic of Gaussian processes. This characteristic of gFHp was known for the class of ggBm; see \cite{MM09} and the reference within.

\end{enumerate}
\end{remark}

\begin{remark}
The real-valued gFHp we have introduced above may be extended to higher dimensions. There are two alternative and distinct methods to obtain a gFHp in higher dimensions. 
\begin{itemize}
    \item[1.] We may simply take independent copies of the real-valued gFHp and obtain a process with independent coordinates.
    \item[2.] Alternatively, we may start with the Hilbert space $L^2_d$ of vector-valued square integrable functions on $\mathbb{R}$ and the corresponding nuclear triple: $S_d\subset L^2_d\subset S'_d$. The corresponding gFHp associated is now a process with values in $\mathbb{R}^d$ but without independent coordinates due to the lack of the semi-group property of the function $_m\Psi_p$ in general. 
\end{itemize}
\end{remark}

\subsection{Representations}

The following proposition provides two alternative representations of the generalized Fox-$H$ processes in terms of other known processes. We denote by $Y_{A,B}$ the positive random variable with density in the class F$H$dam, i.e., $Y_{A,B}\in\mathcal{X}$, see Lemma~\ref{lem:HFiniteDensities}. More specifically, the probability density function of $Y_{A,B}$ is given by Equation~\eqref{eq:densitiesFiniteMoments}.
\begin{proposition} \label{Prop:SubordRepresen}
Let $Y_{A,B}$ be a random variable in $\mathcal{X}$ with density $H^{m,0}_{p,m}(\cdot)/K$, and $B^{\mathbbm{H}}$ the fractional Brownian motion with Hurst parameter $\mathbbm{H} \in (0,1)$, both defined on the same probability space $(\Omega, \mathcal{A}, \mathds{P})$. Then, provided that $Y_{A,B}$ and $B^{\mathbbm{H}}$ are independent, we have 

\begin{equation}\label{eq: Distribu Equalities}
     \{ X^{\mathbbm{H}, A, B}_t, \, t\geq 0\}\overset{\mathrm{f.d.d.}}{=\joinrel=\joinrel=}\{ \sqrt{Y_{A,B}}B^{\mathbbm{H}}_t , \, t\geq 0\} \overset{\mathrm{f.d.d.}}{=\joinrel=\joinrel=}\{ B^{\mathbbm{H}}_{t (Y_{A,B})^{1/(2\mathbbm{H})}}, \, t\geq 0\}.  \end{equation}
\end{proposition}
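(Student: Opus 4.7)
The plan is to compare characteristic functions at arbitrary finite time points, relying on two inputs: the explicit formula for the characteristic function of $X^{\mathbbm{H},A,B}$ from Proposition~\ref{prop:CFDensityMomentsCovariance}(1), and the Laplace-transform identity~\eqref{eq:LTdensitiesFiniteMoments} that connects ${}_m\Psi_p$ to the density of $Y_{A,B}$. This turns the statement into a short calculation based on conditioning.

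Fix $n\in\mathbb{N}$, $0<t_1<\dots<t_n$ and $\lambda\in\mathbb{R}^n$, and write $B^{\mathbbm{H}}=(B^{\mathbbm{H}}_{t_1},\dots,B^{\mathbbm{H}}_{t_n})$, which is centered Gaussian with covariance $\Sigma_{\mathbbm{H},n}$. For the first equality in \eqref{eq: Distribu Equalities}, I would condition on $Y_{A,B}=\tau$, use independence of $Y_{A,B}$ and $B^{\mathbbm{H}}$, and compute
\[
\mathbb{E}\bigl[\mathrm{e}^{\mathrm{i}(\lambda,\sqrt{Y_{A,B}}B^{\mathbbm{H}})}\bigr]
=\int_0^\infty \mathrm{e}^{-\tau(\lambda,\Sigma_{\mathbbm{H},n}\lambda)/2}\,\frac{H^{m,0}_{p,m}(\tau)}{K}\,\mathrm{d}\tau
=(\mathscr{L}\varrho_H)\!\left(\tfrac{(\lambda,\Sigma_{\mathbbm{H},n}\lambda)}{2}\right),
\]
and then invoke \eqref{eq:LTdensitiesFiniteMoments} to rewrite the right-hand side as
$K^{-1}\,{}_m\Psi_p\!\bigl(-\tfrac{1}{2}(\lambda,\Sigma_{\mathbbm{H},n}\lambda)\bigr)$. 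This matches the characteristic function of $X^{\mathbbm{H},A,B}$ from Proposition~\ref{prop:CFDensityMomentsCovariance}(1), so the two processes share all finite-dimensional distributions.

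For the second equality, I would use the $\mathbbm{H}$-self-similarity of fractional Brownian motion, which gives, for any fixed $c>0$, the distributional identity $\{B^{\mathbbm{H}}_{ct},\,t\ge 0\}\stackrel{\mathrm{f.d.d.}}{=}\{c^{\mathbbm{H}}B^{\mathbbm{H}}_t,\,t\ge 0\}$. Setting $c=Y_{A,B}^{1/(2\mathbbm{H})}$, conditional on $Y_{A,B}$ the processes $\{B^{\mathbbm{H}}_{tY_{A,B}^{1/(2\mathbbm{H})}}\}$ and $\{\sqrt{Y_{A,B}}\,B^{\mathbbm{H}}_t\}$ have the same finite-dimensional laws, since $c^{\mathbbm{H}}=\sqrt{Y_{A,B}}$. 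Taking the expectation over $Y_{A,B}$ (which is independent of $B^{\mathbbm{H}}$) preserves the equality of f.d.d.

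I do not expect a genuine obstacle here: the hardest point is really bookkeeping, namely justifying the interchange of expectation and the Gaussian characteristic function under conditioning (immediate from Fubini, since the integrand is bounded by $1$ in modulus and $\varrho_H$ is a probability density) and being careful that the self-similarity statement used in the second step is valid jointly in $t$, not only marginally. The latter holds because scaling time by a deterministic $c$ acts linearly on the Gaussian covariance $\Sigma_{\mathbbm{H},n}$, so conditioning on $Y_{A,B}=\tau$ reduces the claim to a deterministic identity between covariance matrices, which one reads off \eqref{eq: fBm Covariance}.
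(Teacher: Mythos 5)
Your proposal is correct and follows essentially the same route as the paper's proof: both equalities are established via finite-dimensional characteristic functions, conditioning on $Y_{A,B}$, the Laplace-transform identity \eqref{eq:LTdensitiesFiniteMoments}, and the $\mathbbm{H}$-self-similarity of fBm for the second identity. The only (immaterial) difference is that you read the chain of equalities starting from the mixture side, whereas the paper starts from the ${}_m\Psi_p$ characteristic function.
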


\begin{proof}
    Let $n \in \N$, $t_1,\dots,t_n \in (0,\infty)$ and $\lambda \in \R^n$ be given. For $X^{\mathbbm{H}, A, B}=(X^{\mathbbm{H}, A, B}_{t_1}
        ,\dots,X^{\mathbbm{H}, A, B}_{t_n})$ and $B^{\mathbbm{H}}=(B^{\mathbbm{H}}_{t_1}
        ,\dots,B^{\mathbbm{H}}_{t_n})$, we have
    \begin{eqnarray*}
        \mathds{E}_{\mu_{\Psi}}\bigl[\mathrm{e}^{\mathrm{i}(\lambda,X^{\mathbbm{H}, A, B})}\bigr]&\overset{*}{=}& \frac{1}{K}\, _m\!\Psi_p\left[ \genfrac{}{}{0pt}{}{(b_i+\beta_i,\beta_i)_{1,m}}{(a_j+\alpha_j,\alpha_j)_{1,p}} \middle| -\frac{ (\lambda, \Sigma_{\mathbbm{H},n} \lambda) }{2} \right] \\&=& \frac{1}{K}\int_0^\infty \mathrm{e}^{-\frac{s}{2}(\lambda,\Sigma_{\mathbbm{H},n} \lambda)}H^{m,0}_{p,m}(s)\,\mathrm{d}s \\ 
        &\overset{**}{=}&\frac{1}{K}\int_0^\infty \mathbb{E}\left[\mathrm{e}^{\mathrm{i}\sqrt{s}(\lambda,B^\mathbb{H})}\Big|Y_{A,B}=s\right]  H^{m,0}_{p,m}(s)\,\mathrm{d}s \\ 
        &=&\mathds{E}_{\mathds{P}}\bigl[\mathrm{e}^{\mathrm{i}(\lambda,\sqrt{Y_{A,B}}B^{\mathbbm{H}})}\bigr]
    \end{eqnarray*}
    where in $*$ the matrix $\Sigma_{\mathbbm{H},n}$ is given in Proposition~\ref{prop:CFDensityMomentsCovariance}-1, while in $**$ the measure $\mu_{B^\mathbbm{H}}$ represents the image measure of the fBm.\\
    The proof of the second equality in Equation~\eqref{eq: Distribu Equalities}, let $\theta=(\theta_1,\dots,\theta_n)\in\mathbb{R}^n$, be given. Using the conditional expectation, we obtain
\begin{eqnarray*}
&&\mathbb{E}_\mathbbm{P}\left[\exp\left(\mathrm{i}\sum_{k=1}^{n}\theta_{k}B^{\mathbb{H}}
_{t_{k}(Y_{A,B})^{1/(2\mathbbm{H})}}\right)\right] \\ 
&=&\mathbb{E}_\mathbbm{P}\left[\mathbb{E}\left[\exp\left(\mathrm{i}\sum_{k=1}^{n}\theta_{k}B^{\mathbb{H}}_{t_{k}(Y_{A,B})^{1/(2\mathbbm{H})}}\right)\Bigg|Y_{A,B}\right]\right]\\
&=&\frac{1}{K}\int_0^\infty\mathbb{E}\left[\exp\left(\mathrm{i}\sum_{k=1}^{n}\theta_{k}B^{\mathbb{H}}_{t_{k}s^{1/(2\mathbbm{H})}}\right)\Bigg|Y_{A,B}=s\right]H_{p,m}^{m,0}(s)\,\mathrm{d}s\\
&=&\frac{1}{K}\int_0^\infty\mathbb{E}\left[\exp\left(\mathrm{i}\sum_{k=1}^{n}\theta_{k}s^{1/2}B^{\mathbb{H}}_{t_{k}}\right)\Bigg|Y_{A,B}=s\right]H_{p,m}^{m,0}(s)\,\mathrm{d}s\\ &=&\mathbb{E}_\mathbbm{P}\left[\exp\left(\mathrm{i}\sum_{k=1}^{n}\theta_{k}\sqrt{Y_{A,B}}B^{\mathbb{H}}_{t_{k}}\right)\right],
\end{eqnarray*}
where, in the third equality, we have used the $\mathbb{H}$-self-similarity of fBm. This concludes the proof. 
\end{proof}

The aforementioned representations allow us to establish the following properties for the gFHp.
\begin{corollary}
For $\mathbbm{H}\in(0,\frac{1}{2}) \cup (\frac{1}{2},1)$, the generalized Fox-$H$ process is not a semimartingale. In addition, $X^{\mathbbm{H}, A, B}_t, t \geq 0$, cannot have finite variation on $[0,1]$ and, by scaling and the stationarity of increments, on any interval. 
\end{corollary}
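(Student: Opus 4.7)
The strategy is to transfer the classical non-semimartingale property of fractional Brownian motion to $X^{\mathbbm{H},A,B}$ via the subordination representation $X^{\mathbbm{H},A,B}_t \stackrel{\mathrm{f.d.d.}}{=} \sqrt{Y_{A,B}}\, B^{\mathbbm{H}}_t$ established in Proposition~\ref{Prop:SubordRepresen}. The key fact is that $Y_{A,B}$ is almost surely strictly positive, so any divergence or degeneracy of an fBm-based path functional is inherited by the scaled process. Because the semimartingale property and $p$-variation are laws of the continuous process, I may work directly on the product probability space where both $Y_{A,B}$ and $B^{\mathbbm{H}}$ live independently.

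First, I would examine the dyadic quadratic variation $V^{(2)}_n := \sum_{k=0}^{2^n-1} (X^{\mathbbm{H},A,B}_{(k+1)/2^n}-X^{\mathbbm{H},A,B}_{k/2^n})^2$ on $[0,1]$. The representation gives $V^{(2)}_n = Y_{A,B}\sum_{k=0}^{2^n-1}(B^{\mathbbm{H}}_{(k+1)/2^n}-B^{\mathbbm{H}}_{k/2^n})^2$, and classically (see e.g.~\cite{Mishura08}) the fBm dyadic quadratic sums diverge to $+\infty$ in probability when $\mathbbm{H}<1/2$ and tend to $0$ when $\mathbbm{H}>1/2$; multiplication by the positive random variable $Y_{A,B}$ preserves both limits in probability.

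Next, I split into the two regimes. For $\mathbbm{H}\in(0,1/2)$, the quadratic variation of $X^{\mathbbm{H},A,B}$ is infinite (in probability) along dyadic partitions, whereas every semimartingale admits a finite-in-probability quadratic variation; hence $X^{\mathbbm{H},A,B}$ is not a semimartingale. For $\mathbbm{H}\in(1/2,1)$, the dyadic quadratic variation vanishes, so if $X^{\mathbbm{H},A,B}$ were a semimartingale its continuous local-martingale part would be trivial, forcing it to be of finite variation on $[0,1]$. But the total (1-)variation along any refining sequence equals $\sqrt{Y_{A,B}}$ times the total variation of $B^{\mathbbm{H}}$, which is known to be almost surely infinite for every $\mathbbm{H}\in(0,1)$; since $\sqrt{Y_{A,B}}>0$ almost surely, the total variation of $X^{\mathbbm{H},A,B}$ on $[0,1]$ is almost surely infinite. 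This contradiction rules out the semimartingale property in the second regime as well, and simultaneously proves the infinite-variation assertion on $[0,1]$ in both regimes (in the case $\mathbbm{H}<1/2$ one uses the same $p=1$ comparison).

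Finally, to extend the infinite-variation statement from $[0,1]$ to an arbitrary interval $[a,b]\subset[0,\infty)$, I would invoke the $\mathbbm{H}$-self-similarity and stationarity of increments proven earlier: the law of $\{X^{\mathbbm{H},A,B}_{a+s}-X^{\mathbbm{H},A,B}_a\}_{s\in[0,b-a]}$ equals that of $\{(b-a)^{\mathbbm{H}} X^{\mathbbm{H},A,B}_{s/(b-a)}\}_{s\in[0,1]}$, so the total variation on $[a,b]$ has the same distribution as $(b-a)^{\mathbbm{H}}$ times the almost surely infinite total variation on $[0,1]$. The delicate point of the argument is the case $\mathbbm{H}>1/2$: vanishing quadratic variation alone does not exclude the semimartingale property, so one genuinely needs the fBm total-variation input to close the $\mathbbm{H}>1/2$ case; the independence between $Y_{A,B}$ and $B^{\mathbbm{H}}$ together with positivity of $Y_{A,B}$ is exactly what makes this transfer work.
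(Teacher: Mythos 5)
Your proof is correct and follows essentially the same route as the paper: reduce to fractional Brownian motion via the representation $X^{\mathbbm{H},A,B}\overset{\mathrm{f.d.d.}}{=}\sqrt{Y_{A,B}}\,B^{\mathbbm{H}}$ from Proposition~\ref{Prop:SubordRepresen} and analyze the quadratic variation along partitions of $[0,1]$, closing both regimes by the classical fBm argument (Mishura, p.~71). If anything, your write-up is more complete than the paper's, which only computes the \emph{expected} quadratic sums and defers the rest to the fBm case, whereas you transfer the in-probability limits and the a.s.\ infinite total variation directly through the a.s.\ positive, independent multiplier $Y_{A,B}$, and you make the reduction to arbitrary intervals via self-similarity and stationary increments explicit.
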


\begin{proof}
    For $\mathbbm{H}\neq 1/2$ we can get the results by noting that the quadratic variation behaves as for the fractional Brownian motion. More explictly, denoting by $\pi_n, n \in \N$, the set of $n$ elements partitioning $[0,1]$, we have
    \begin{eqnarray*}   &&\mathds{E}\left(\sum_{ \pi_n}(X^{\mathbbm{H}, A, B}_{t_{i+1}}-X^{\mathbbm{H}, A, B}_{t_i})^2\right)=\mathds{E}\left(\sum_{ \pi_n}(\sqrt{Y_{A,B}}B^{\mathbbm{H}}_{t_{i+1}}-\sqrt{Y_{A,B}}B^{\mathbbm{H}}_{t_i})^2\right)\\
    &=& \int_0^\infty \mathds{E}\left(y\left(\sum_{ \pi_n}(B^{\mathbbm{H}}_{t_{i+1}}-B^{\mathbbm{H}}_{t_i})^2\right) \middle| \, Y_{A,B}=y\right)\mathds{P}_{Y_{A,B}}(\mathrm{d}y)\\
    &=&\mathds{E}(Y_{A,B})\sum_{\pi_n} (t_{i+1} - t_i)^{2\mathbbm{H}-1}.  \end{eqnarray*}
    Then, the result can be derived through the same reasoning applied to fractional Brownian motion, see p.71 in \cite{Mishura08}.
\end{proof}

\subsection{The Generalized Fox-\texorpdfstring{$H$}{H} Noise Process}
In the following, we use the characterization Theorem~\ref{thm:StransIsomor} and Theorem~\ref{thm:StransConverg} to establish the existence of the generalized Fox-$H$ noise process $N_t^{\mathbbm{H}, A, B}$, $t\ge0$, as an element in $(\mathcal{S})_{\mu_\Psi}^{-1}$ by giving the $S_{\mu_{\Psi}}$-transform.
More specifically, using the $S_{\mu_{\Psi}}$-transform of $X^{\mathbbm{H}, A, B}_t$, we prove the existence of the time derivative of $X_t^{\mathbbm{H}, A, B}$, in a distributional sense, as an element in $(\mathcal{S})^{-1}_{\mu_\Psi}$.\\
For $l,k \in \mathbb{N}$ and $\xi \in U_{l,k}$, the $S_{\mu_{\Psi}}$-transform of $X^{\mathbbm{H}, A, B}_t$ is given by
	
	\begin{eqnarray}
		\bigl(S_{\mu_{\Psi}}X^{\mathbbm{H}, A, B}_t\bigr)(\xi)&=&\ell^{-1}_{\mu_{\Psi}}(\xi)\int_{\mathcal{S}'}\langle \omega ,M_{-}^{\mathbbm{H}}\mathbbm{1}_{[0,t)}\rangle \mathrm{e}^{\langle \omega, \xi\rangle }\, \mathrm{d}\mu_{\Psi}(\omega) \label{momentlaplace}\\
		&=&\ell^{-1}_{\mu_{\Psi}}(\xi) \int_{\mathcal{S}'} \frac{\partial}{\partial s} \mathrm{e}^{\langle \omega, \xi\rangle + s\langle \omega,M_{-}^{\mathbbm{H}}\mathbbm{1}_{[0,t)}  \rangle}\Big|_{s=0}\, \mathrm{d}\mu_{\Psi}(\omega)\\
		&=&\ell^{-1}_{\mu_{\Psi}}(\xi) \frac{\partial}{\partial s}\int_{\mathcal{S}'}  \mathrm{e}^{\langle \omega, \xi + sM_{-}^{\mathbbm{H}}\mathbbm{1}_{[0,t)}  \rangle}\, \mathrm{d}\mu_{\Psi}(\omega)\Big|_{s=0},
	\end{eqnarray}
	where we can exchange the integral and the derivative because the integral in (\ref{momentlaplace}) is finite by the Cauchy-Schwarz inequality, $X^{\mathbbm{H}, A, B}_t\in L^2(\mu_\Psi)$.
	Recalling 
    \begin{eqnarray*}
    &&\int_{\mathcal{S}'}  \e^{\langle \omega, \xi + sM_{-}^{\mathbbm{H}}\mathbbm{1}_{[0,t)}  \rangle} \mu_{\Psi}(\mathrm{d}\omega)\\
    &&= \frac{1}{K} \,\,_m\Psi_p\left[ \genfrac{}{}{0pt}{}{(b_i+\beta_i, \beta_i )_{1,m}}{(a_j+\alpha_j, \alpha_j )_{1,p} }\Bigg|\frac{1}{2}\langle\xi + sM_{-}^{\mathbbm{H}}\mathbbm{1}_{[0,t)},\xi + sM_{-}^{\mathbbm{H}}\mathbbm{1}_{[0,t)}\rangle \right],
    \end{eqnarray*}
    we compute its derivative using Remark~\ref{rem:GeneralizedWrightDerivatives}
	\begin{eqnarray*}&& \frac{\partial}{\partial s}\int_{\mathcal{S}'}  \e^{\langle \omega, \xi + sM_{-}^{\mathbbm{H}}\mathbbm{1}_{[0,t)}\rangle}\, \mathrm{d}\mu_{\Psi}(\omega)\\
    &=& \, _m\Psi_p\left[ \genfrac{}{}{0pt}{}{(b_i+2\beta_i, \beta_i )_{1,m}}{(a_j+2\alpha_j, \alpha_j )_{1,p} }\Bigg| \frac{1}{2}|\xi + sM_{-}^{\mathbbm{H}}\mathbbm{1}_{[0,t)} |^2_0 \right]\frac{\Big( s |M_{-}^{\mathbbm{H}}\mathbbm{1}_{[0,t)}|^2_0 + \langle \xi, M_{-}^{\mathbbm{H}}\mathbbm{1}_{[0,t)}  \rangle \Big)}{K}.\end{eqnarray*}
	Hence, we get
	
	\begin{eqnarray*}\label{StransfGWGBM}
    &&(S_{\mu_{\Psi}}X^{\mathbbm{H}, A, B}_t)(\xi)=\ell^{-1}_{\mu_{\Psi}}(\xi) \frac{\partial}{\partial s}\int_{\mathcal{S}'}  \e^{\langle \omega, \xi + sM_{-}^{\mathbbm{H}}\mathbbm{1}_{[0,t)}  \rangle}\, \mathrm{d}\mu_{\Psi}(\omega)\Big|_{s=0}\\
	&=&  \frac{1}{\,\,_m\Psi_p( \frac{1}{2}|\xi|^2_0)}\, _m\Psi_p\left[ \genfrac{}{}{0pt}{}{(b_i+2\beta_i, \beta_i )_{1,m}}{(a_j+2\alpha_j, \alpha_j )_{1,p} }\Bigg| \frac{1}{2}|\xi |^2_0 \right] \langle \xi, M_{-}^{\mathbbm{H}}\mathbbm{1}_{[0,t)}  \rangle \\
    &\overset{*}{=}& \frac{1}{\,\,_m\Psi_p( \frac{1}{2}|\xi|^2_0)}\, _m\Psi_p\left[ \genfrac{}{}{0pt}{}{(b_i+2\beta_i, \beta_i )_{1,m}}{(a_j+2\alpha_j, \alpha_j )_{1,p} }\Bigg| \frac{1}{2}|\xi |^2_0 \right] \left(\int_{0}^tM_{+}^{\mathbbm{H}}\xi(x)\mathrm{d}x \right), \end{eqnarray*}
	where in $*$ we used Equations~(5.16) and (5.17) in \cite{SKM93}.

	Now we establish the differentiability in $(\mathcal{S})^{-1}_{\mu_{\Psi}}$ of the gFHp, $X^{\mathbbm{H},A,B}_t,t\geq 0$, that is, the noise of gFHp is an element in $(\mathcal{S})^{-1}_{\mu_{\Psi}}$.
	\begin{theorem}\label{thm:noiseExiste}
		Let the assumptions of Lemma \ref{lem:HFiniteDensities} hold and $\mathbbm{H} \in (0,1)$.
		We define the gFHp in $(\mathcal{S})^{-1}_{\mu_{\Psi}}$ as,
		\[ 
        N_t^{\mathbbm{H}, A, B}(\omega):=\lim_{h \to 0} \frac{X^{\mathbbm{H}, A, B}_{t+h}(\omega)- X^{\mathbbm{H}, A, B}_t(\omega) }{h}, \quad \mu_{\Psi}\mbox{-} \text{a.a.}\; \omega \in \mathcal{S}',
        \]
		and exists a $l_\mathbbm{H} \in \N$ such that, for every $\xi \in U_{l,k},l\geq l_\mathbbm{H}$, we have		
		\[  
        \big(S_{\mu_{\Psi}} N_t^{\mathbbm{H}, A, B}\big)(\xi)=\frac{1}{\,\,_m\Psi_p( \frac{1}{2}|\xi|^2_0)}\, _m\Psi_p\left[ \genfrac{}{}{0pt}{}{(b_i+2\beta_i, \beta_i )_{1,m}}{(a_j+2\alpha_j, \alpha_j )_{1,p} }\Bigg| \frac{1}{2}|\xi |^2_0 \right](M_{+}^{\mathbbm{H}} \xi)(t).
        \]
	\end{theorem}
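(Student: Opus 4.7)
The plan is to apply the convergence characterization Theorem~\ref{thm:StransConverg} to the family of difference quotients
\[
\Phi_h:=\frac{X^{\mathbbm{H},A,B}_{t+h}-X^{\mathbbm{H},A,B}_t}{h},\qquad h\to 0.
\]
By linearity of $S_{\mu_\Psi}$ and the explicit formula for $(S_{\mu_\Psi}X^{\mathbbm{H},A,B}_t)(\xi)$ derived in the calculation immediately preceding the theorem, one obtains
\[
(S_{\mu_\Psi}\Phi_h)(\xi)=G(\xi)\cdot\frac{1}{h}\int_{t}^{t+h}(M^{\mathbbm{H}}_{+}\xi)(x)\,\mathrm{d}x,
\]
where $G(\xi)$ is exactly the $\xi$-prefactor appearing in the claimed $S_{\mu_\Psi}$-transform of $N_t^{\mathbbm{H},A,B}$. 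The task then reduces to verifying the two hypotheses of Theorem~\ref{thm:StransConverg} on a suitable neighbourhood $U_{l,k}$ for $l\geq l_\mathbbm{H}$.

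For the Cauchy property, note that for $\xi\in\mathcal{S}(\mathbb{R})$ the function $M^{\mathbbm{H}}_{+}\xi$ is continuous on $\mathbb{R}$ (a fractional integral of a Schwartz function is smooth when $\mathbbm{H}>1/2$, and for $\mathbbm{H}<1/2$ the fractional derivative still yields a continuous function), so Lebesgue differentiation gives
\[
\frac{1}{h}\int_{t}^{t+h}(M^{\mathbbm{H}}_{+}\xi)(x)\,\mathrm{d}x\xrightarrow[h\to 0]{}(M^{\mathbbm{H}}_{+}\xi)(t),
\]
and therefore $(S_{\mu_\Psi}\Phi_h)(\xi)\to G(\xi)(M^{\mathbbm{H}}_{+}\xi)(t)$, which matches the RHS of the claim. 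Holomorphy of $\xi\mapsto(S_{\mu_\Psi}\Phi_h)(\xi)$ on $U_{l,k}$ follows at once: $G$ is holomorphic because ${}_m\Psi_p(-\cdot)$ is entire under Assumption~\ref{ass:AllHdensity} together with $a^*\in(0,1)$ (Remark~\ref{rem:GeneralizedWrightDerivatives}), its denominator equals $K>0$ at $\xi=0$ and hence remains non-zero on a zero neighbourhood, while the second factor is manifestly linear in $\xi$.

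For the uniform bound, choose $l_\mathbbm{H}\in\mathbb{N}$ such that for every $l\geq l_\mathbbm{H}$ the operator $M^{\mathbbm{H}}_{+}$ extends to a bounded map $\mathcal{H}_l\to C_b(\mathbb{R})$; this is the standard continuity estimate for Riemann--Liouville operators on the Schwartz scale. Then for $\xi\in U_{l,k}$ one has $|\xi|_0\leq|\xi|_l<2^{-k}$, so $\tfrac{1}{2}|\xi|_0^2$ lies in a small disk around the origin on which $G$ is uniformly bounded, while
\[
\left|\frac{1}{h}\int_{t}^{t+h}(M^{\mathbbm{H}}_{+}\xi)(x)\,\mathrm{d}x\right|\leq\sup_{x\in\mathbb{R}}|(M^{\mathbbm{H}}_{+}\xi)(x)|\leq C_l|\xi|_l,
\]
providing a bound on $|(S_{\mu_\Psi}\Phi_h)(\xi)|$ uniform in both $\xi\in U_{l,k}$ and $h$ small. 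Theorem~\ref{thm:StransConverg} then delivers $\Phi_h\to N_t^{\mathbbm{H},A,B}$ strongly in $(\mathcal{S})^{-1}_{\mu_\Psi}$, and since the $S_{\mu_\Psi}$-transform is a topological isomorphism (Theorem~\ref{thm:StransIsomor}), its value on the limit is exactly the formula displayed in the theorem.

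The main obstacle is the pointwise evaluation step: for $\mathbbm{H}<1/2$ the operator $M^{\mathbbm{H}}_{+}$ is a Riemann--Liouville derivative, so establishing continuity of $M^{\mathbbm{H}}_{+}\xi$ together with the seminorm estimate $\sup_x|(M^{\mathbbm{H}}_{+}\xi)(x)|\leq C_l|\xi|_l$ requires a careful choice of $l_\mathbbm{H}$ exploiting the decay of $\xi$ at infinity. Once this analytical input is secured, the rest is a routine application of the $S$-transform characterization machinery already prepared in Section~\ref{sec:gwa}.
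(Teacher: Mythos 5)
Your proposal is correct and follows essentially the same route as the paper's proof: both compute $(S_{\mu_\Psi}\Phi_h)(\xi)$ from the explicit $S_{\mu_\Psi}$-transform of $X^{\mathbbm{H},A,B}_t$, bound it uniformly via $\sup_x|(M^{\mathbbm{H}}_{+}\xi)(x)|\leq c_{\mathbbm{H}}|\xi|_{l_{\mathbbm{H}}}$ (the paper cites Lemma~4.4 of \cite{BCM24} for exactly the estimate you flag as the main analytical input), obtain pointwise convergence from the continuity of $M^{\mathbbm{H}}_{+}\xi$, and conclude with Theorem~\ref{thm:StransConverg}. The only cosmetic difference is that the paper works with a sequence $h_n\to 0$ rather than the continuous parameter $h$.
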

	
	\begin{proof}
		For $t > 0$ and $n \in \mathbb{N}$, we define the sequence 
		\[N^{\mathbbm{H}, A, B}_{t,n}(\cdot):=\frac{X^{\mathbbm{H}, A, B}_{t+h_n}(\cdot)- X^{\mathbbm{H}, A, B}_{t}(\cdot)}{ h_n} \in L^2(\mu_{\Psi}) \subset (\mathcal{N})^{-1}_{\mu_{\Psi}}\]
		where $\{h_n\}_{n\in \mathbb{N}}$ are such that $h_n \to 0$ for $n \to \infty$.\\
        By Lemma~4.4 in \cite{BCM24}, there exists a $l_\mathbbm{H}\in \mathbb{N}$ such that
		\[ h_n^{-1}\int_{t}^{t+h_n} |M_{+}^{\mathbbm{H}} \xi(x)| dx\leq \max_{x \in \mathbb{R}}(|M_{+}^{\mathbbm{H}} \xi(x)|)\leq c_{\mathbbm{H}}|\xi|_{l_\mathbbm{H}}.\]
		Hence, we apply the $S_{\mu_{\Psi}}$-transform for $\xi \in U_{l_{\mathbbm{H}},0}$
		\begin{eqnarray*}
			\big(S_{\mu_{\Psi}} N^{\mathbbm{H}, A, B}_{t,n} \big)(\xi)&=&\frac{1}{h_n}\Big( (S_{\mu_{\Psi}}X^{\mathbbm{H}, A, B}_{t+h_n})(\xi) - (S_{\mu_{\Psi}} X^{\mathbbm{H}, A, B}_{t})(\xi)\Big)  \\
			&=&\frac{1}{\,\,_m\Psi_p( \frac{1}{2}|\xi|^2_0)}\, _m\Psi_p\left[ \genfrac{}{}{0pt}{}{(b_i+2\beta_i, \beta_i )_{1,m}}{(a_j+2\alpha_j, \alpha_j )_{1,p} }\Bigg| \frac{1}{2}|\xi |^2_0 \right] \\
            &&\times\left( \frac{\int_{t}^{t+h_n}M_{+}^{\mathbbm{H}}\xi(x)\mathrm{d}x}{h_n} \right),
		\end{eqnarray*}
		and from the analyticity of $\ell^{-1}_{\mu_{\Psi}}(\cdot) $ around zero, there exist $k\in \mathbb{N}$ and $l\geq l_\mathbbm{H}$ such that $\ell^{-1}_{\mu_{\Psi}}(\xi)\in (M_1,M_2)$ for $\xi \in U_{l,k}\subset U_{l_\mathbbm{H},0}$ where $M_1 \in (0,1)$ and $M_2 \in (1,\infty)$. Furthermore, the analyticity of $_m\Psi_p(\cdot)$ around zero ensures that its derivative around zero is bounded. Hence, there exists a $K>0$
		
		\[ \left|\frac{1}{\,\,_m\Psi_p( \frac{1}{2}|\xi|^2_0)}\, _m\Psi_p\left[ \genfrac{}{}{0pt}{}{(b_i+2\beta_i, \beta_i )_{1,m}}{(a_j+2\alpha_j, \alpha_j )_{1,p} }\Bigg| \frac{1}{2}|\xi |^2_0 \right]  \right| < K \quad \text{ for } \xi \in U_{l,k}.\] 
		
		Finally, we have that, for each $n \in \mathbb{N}$ and $\xi \in U_{l,k} $
		
		\[  |\big(S_{\mu_{\Psi}} N^{\mathbbm{H}, A, B}_{t,n} \big)(\xi)|<K c_\mathbbm{H} | \xi |_{l} .\]
		By Section 4.2 in \cite{BCM24} we have that $(M_{+}^{\mathbbm{H}} \xi)(\cdot)\in C^{\infty}(\mathbb{R})$, thus
		
		\begin{eqnarray*} 
			&&\lim_{n \to \infty}\big(S_{\mu_{\Psi}} N^{\mathbbm{H}, A, B}_{t,n} \big)(\xi)\\
            &=& \frac{1}{\,\,_m\Psi_p( \frac{1}{2}|\xi|^2_0)}\, _m\Psi_p\left[ \genfrac{}{}{0pt}{}{(b_i+2\beta_i, \beta_i )_{1,m}}{(a_j+2\alpha_j, \alpha_j )_{1,p} }\Bigg| \frac{1}{2}|\xi |^2_0 \right]\lim_{n \to \infty} \left(\frac{\int_{t}^{t+h_n}M_{+}^{\mathbbm{H}}\xi(x)\mathrm{d}x}{h_n} \right)\\
			&=&\frac{1}{\,\,_m\Psi_p( \frac{1}{2}|\xi|^2_0)}\, _m\Psi_p\left[ \genfrac{}{}{0pt}{}{(b_i+2\beta_i, \beta_i )_{1,m}}{(a_j+2\alpha_j, \alpha_j )_{1,p} }\Bigg| \frac{1}{2}|\xi |^2_0 \right] \left(M_{+}^\mathbbm{H}\xi\right)(t) .\\
		\end{eqnarray*}
		Thus, $\left(\big(S_{\mu_{\Psi}} N^{\mathbbm{H}, A, B}_{t,n} \big)(\xi) \right), n \in \N$, is a Cauchy sequence for each $\xi \in U_{l,k}, l \geq l_{\mathbbm{H}}$, $k \in \N$.\\
		By the fact that $\big(S_{\mu_{\Psi}} N^{\mathbbm{H}, A, B}_{t,n} \big)(\xi)$ is holomorphic on $U_{l,k}$ and is finite for each $\xi \in U_{l,k}$, we can apply Theorem \ref{thm:StransConverg} for the convergence of $N^{\mathbbm{H}, A, B}_{t,n}$ to an element in $(\mathcal{S})_{\mu_\Psi}^{-1}$ that we denote by $N^{\mathbbm{H}, A, B}_t$.
	\end{proof}

\section{Applications\label{sec:applications}}
In the following section, we give some applications using the gFHp. We start by showing the existence of the occupation density of $X^{\mathbbm{H}, A, B}_t$, $t\ge0$, at $x \in \mathbb{R}$, employing the criteria of Berman, see \cite{Ber69}, cf.~Subsection~\ref{subsec:gFHp-Ltimes}.

\subsection{Existence of Local Times for gFHp\label{subsec:gFHp-Ltimes}}

For the reader's convenience, we recall the notion of occupation measure
and occupation density. Let $f:I\longrightarrow\mathbb{R}$, $I$
a Borel set in $[0,1]$, be a measurable function and define, for
any set $B\in\mathcal{B}(\mathbb{R})$, the occupation measure $\mu_{f}$
on $I$ by
\[
\mu_{f}(B):=\int_{I}1\!\!1_{B}(f(s))\, \mathrm{d}s.
\]
Interpreting $[0,1]$ as a ''time set'' is the ''amount of
time spent by $f$ in $B$ during the time period $I$''. We say
that $f$ has an occupation density over $I$ if $\mu_{f}$ is absolutely
continuous with respect to the Lebesgue measure $\lambda$ and we denote this density by $L^{f}(\cdot,I)$. In explicit, for any $x\in\mathbb{R}$,
\[
L^{f}(x,I)=\frac{d\mu_{f}}{\mathrm{d}\lambda}(x).
\]
Thus, we have
\[
\mu_{f}(B)=\int_{I}1\!\!1_{B}(f(s))\, \mathrm{d}s=\int_{B}L^{f}(x,I)\, \mathrm{d}x.
\]
A continuous stochastic process $X$ has an occupation density on
$I$ if, for almost all $w\in\Omega$, $X(w)$ has an occupation density
$L^{X}(\cdot,I)$, also called local time of $X$, see Berman \cite{Ber69}.

The criteria for the existence of local times for real-valued stochastic processes
$X$ are due to Berman \cite[Section~3]{Ber69}. More precisely,
a stochastic process $X$ admits a $\lambda$-square integrable local times if and only if
\begin{equation}
\int_{\mathbb{R}}\int_{0}^{1}\int_{0}^{1}\mathbb{E}\big(\e^{\mathrm{i}\theta(X(t)-X(s))}\big)\, \mathrm{d}s\, \mathrm{d}t\,
\mathrm{d}\theta<\infty.\label{Existence-LT}
\end{equation}
In the next, we show that (\ref{Existence-LT}) is fulfilled if $X$ is the gFHp $X^{\mathbbm{H}, A, B}$ .

\begin{theorem}
\label{thm:gBm_LT} Let $2b_j + \beta_j>0$ for $j=1,\dots,m$, then the gFHp $X^{\mathbbm{H}, A, B}$ admits a $\lambda$-square
integrable local time $L^{X^{\mathbbm{H}, A, B}}(\cdot,I)$ $\mu_\Psi$-almost surely.
\end{theorem}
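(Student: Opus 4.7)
The plan is to verify Berman's criterion \eqref{Existence-LT} for $X^{\mathbbm{H},A,B}$ on $I=[0,1]$. From the characteristic function of the increments (already computed in the previous subsection), together with the Laplace-transform identity \eqref{eq:LTdensitiesFiniteMoments}, one obtains
\[
\mathbb{E}\bigl[\e^{\mathrm{i}\theta(X^{\mathbbm{H},A,B}_t-X^{\mathbbm{H},A,B}_s)}\bigr]=\frac{1}{K}\int_0^\infty\e^{-\frac{\theta^2}{2}|t-s|^{2\mathbbm{H}}\tau}\,H^{m,0}_{p,m}(\tau)\,\mathrm{d}\tau,
\]
which is non-negative as the integral of a positive integrand. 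Tonelli then allows the triple integral in \eqref{Existence-LT} to be rearranged freely; integrating first in $\theta$ against the Gaussian $\e^{-\frac{\theta^{2}}{2}|t-s|^{2\mathbbm{H}}\tau}$ yields $\sqrt{2\pi}\,|t-s|^{-\mathbbm{H}}\tau^{-1/2}$, so that \eqref{Existence-LT} collapses to the product
\[
\frac{\sqrt{2\pi}}{K}\left(\int_0^\infty\frac{H^{m,0}_{p,m}(\tau)}{\sqrt{\tau}}\,\mathrm{d}\tau\right)\left(\int_0^1\int_0^1\frac{\mathrm{d}s\,\mathrm{d}t}{|t-s|^{\mathbbm{H}}}\right).
\]

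The second factor is finite because $\mathbbm{H}\in(0,1)$. For the first factor, I would invoke the asymptotic description of the F$H$dam density from Corollary~\ref{prop:AsymptotHDensity}: the stretched exponential decay \eqref{eq:AsymptoticHDensityinfinity} makes $\tau^{-1/2}H^{m,0}_{p,m}(\tau)$ integrable for large $\tau$, while near zero \eqref{eq:AsymptoticHDensityzero} gives $H^{m,0}_{p,m}(\tau)=O(\tau^{\rho})$ with $\rho=\min_{j}b_{j}/\beta_{j}$, so that $\tau^{-1/2}H^{m,0}_{p,m}(\tau)=O(\tau^{\rho-1/2})$ is integrable on a neighborhood of $0$ iff $\rho>-1/2$, i.e.\ $b_{j}/\beta_{j}>-1/2$ for every $j=1,\dots,m$. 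This is exactly the standing hypothesis $2b_{j}+\beta_{j}>0$ of the theorem, which closes the argument.

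An equivalent route, which I would mention as an alternative, uses the subordination representation of Proposition~\ref{Prop:SubordRepresen}: since $X^{\mathbbm{H},A,B}_t\stackrel{\mathrm{f.d.d.}}{=}\sqrt{Y_{A,B}}\,B^{\mathbbm{H}}_t$, conditioning on $Y_{A,B}$ gives $\mathbb{E}[\e^{\mathrm{i}\theta(X^{\mathbbm{H},A,B}_t-X^{\mathbbm{H},A,B}_s)}]=\mathbb{E}[\e^{-\frac{1}{2}\theta^{2}Y_{A,B}|t-s|^{2\mathbbm{H}}}]$; integrating in $\theta$ via the Gaussian identity reduces Berman's condition to $\mathbb{E}[Y_{A,B}^{-1/2}]<\infty$, and the behavior-at-zero analysis above shows this negative moment of $\varrho_H$ is finite precisely when $2b_j+\beta_j>0$. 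The only delicate step is this near-zero integrability check; the remainder is routine Fubini/Tonelli bookkeeping. If one is worried about the simple-pole hypothesis in Corollary~\ref{prop:AsymptotHDensity}, a direct Mellin-Barnes estimate of $H^{m,0}_{p,m}$ near $0$ produces the same exponent $\rho$ and yields the same conclusion.
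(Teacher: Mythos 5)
Your proposal is correct and follows essentially the same route as the paper: Berman's criterion, the Laplace-transform (Gaussian-mixture) representation of the increment characteristic function, Fubini/Tonelli, the Gaussian integral in $\theta$, and the reduction to the finiteness of $\int_0^\infty\tau^{-1/2}H^{m,0}_{p,m}(\tau)\,\mathrm{d}\tau$ together with $\int_0^1\int_0^1|t-s|^{-\mathbbm{H}}\,\mathrm{d}s\,\mathrm{d}t$. The only cosmetic difference is that the paper justifies the first integral by identifying it as the Mellin transform $\bigl(\mathcal{M}H^{m,0}_{p,m}\bigr)(1/2)$ via Theorem~2.2 in \cite{SaiKil} under $2b_j+\beta_j>0$, whereas you use the asymptotics of Corollary~\ref{prop:AsymptotHDensity} (which carries the extra simple-pole hypothesis you correctly flag and circumvent).
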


\begin{proof}
The characteristic function of the increments
of gFHp $X^{\mathbbm{H}, A, B}$ is given by

\begin{eqnarray*}
\mathbb{E}\big(\mathrm{e}^{ix(X^{\mathbbm{H}, A, B}_t-X^{\mathbbm{H}, A, B}_s)}\big) = \frac{1}{K}\,  _m \Psi_p\left(-\frac{x^{2}}{2}|t-s|^{2\mathbbm{H}}\right).
\end{eqnarray*}
Using the LT representation of $_m \Psi_p(-\cdot)$, denoting $a:=(2)^{-1}|t-s|^{2\mathbbm{H}}$ we have

\begin{eqnarray*}
   \int_{\mathbb{R}}\, _m \Psi_p\left(-\frac{x^{2}}{2}|t-s|^{2 \mathbbm{H}}\right)\mathrm{d}x&=&\int_{\mathbb{R}} \int_0^\infty e^{-ax^2 r}H^{m,0}_{p,m}(r)\mathrm{d}r\mathrm{d}x\\
    &\overset{*}{=}&\int_0^\infty H^{m,0}_{p,m}(r)\int_{\mathbb{R}}e^{-ax^2 r}\mathrm{d}x   \mathrm{d}r\\
    &=&\sqrt{\frac{\pi}{a}}\int_0^\infty 
 r^{-1/2}H^{m,0}_{p,m}(r) \mathrm{d}r\\
 &=&\sqrt{\frac{\pi}{a}} \big(\mathcal{M}H^{m,0}_{p,m}\big) (1/2)<\infty,
\end{eqnarray*}
where we use Fubini in $*$ and Theorem~2.2 in \cite{SaiKil} together with $2b_j + \beta_j>0,j=1,\dots,m$, to ensure that the constant $\bigl(\mathcal{M}H^{m,0}_{p,m}\bigr) (1/2)$ identifies the Mellin transform of $H^{m,0}_{p,m}$ evaluated in $1/2$, see \cite{SaiKil}.\\
To conclude the proof, we need to show the finiteness of the following integral
\begin{eqnarray*}
\int_{0}^{1}\int_{0}^{1}\frac{1}{|t-s|^{\mathbbm{H}}}\, \mathrm{d}s\, \mathrm{d}t & = & 2 \int_{0}^{1}\int_{0}^{t}\frac{1}
{|t-s|^{\mathbbm{H}}}\, \mathrm{d}s\, \mathrm{d}t\\
 & = & \frac{2}{1-\mathbbm{H}}\int_{0}^{1}t^{1-\mathbbm{H}}\, \mathrm{d}t\\
 & = & \frac{2}{(2-\mathbbm{H})(1-\mathbbm{H})}.\\
\end{eqnarray*} We showed the claim using Equation~\eqref{Existence-LT}.  
\end{proof}

\begin{remark}
As a consequence of the existence of the local time $L^{X^{\mathbbm{H},A,B}}(\cdot,I)$,
we obtain the occupation formula
\[
\int_{I}g(X^{\mathbbm{H},A,B}_s)\, \mathrm{d}s=\int_{\mathbb{R}}g(x)L^{X^{\mathbbm{H},A,B}}(x,I)\, \mathrm{d}x,\; a.s.-\omega \in \mathcal{S}',
\]
for each $g: \mathbb{R}\to\mathbb{R}$ measurable and bounded function. Moreover, using the informal representation of local times as 
\begin{equation}\label{eq:LT-informal}
L^{X^{\mathbbm{H},A,B}}(x,[0,t])(\omega)=\int_0^t \delta_x(\langle \omega, M^{\mathbbm{H}}_{-} \mathds{1}_{{[0,t)}} \rangle) \,\mathrm{d}s
\end{equation}
it follows from Proposition 5.5 in \cite{BCDS24} and its proof, together with Theorem~\ref{thm:CharIntegrableMap}, that $L^{X^{\mathbbm{H},A,B}}(x,[0,t])(\cdot) \in (\mathcal{S})^{-1}_{\mu_\Psi}$.
\end{remark}

\subsection{Anomalous Diffusion}
Now we would highlight the different kinds of diffusion that gFHp can exhibit. To differentiate between them, the variance growth over time is compared with linear growth. A faster (respectively, similar, or slower) increase in the process's variance indicates super-diffusion (respectively, diffusion, and sub-diffusion).
The gFHp exhibits different diffusion behaviors depending on the Hurst parameter $\mathbbm{H}$, as stated in the following proposition.
\begin{proposition}
    The gFHp has sub-diffusion for $\mathbbm{H} \in (0,1/2)$, normal diffusion if $\mathbbm{H}=1/2$ and super-diffusion for $\mathbbm{H} \in (1/2,1)$.
\end{proposition}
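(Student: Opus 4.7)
The plan is to read off the variance of $X^{\mathbbm{H},A,B}_t$ directly from the moment formula in Proposition~\ref{prop:CFDensityMomentsCovariance}(3) and compare its growth rate to the linear benchmark $t$. Since the odd moments vanish, we have $\mathbb{E}[X^{\mathbbm{H},A,B}_t]=0$, and so
\[
\operatorname{Var}(X^{\mathbbm{H},A,B}_t) = \mathbb{E}\big[(X^{\mathbbm{H},A,B}_t)^2\big] = \frac{1}{K}\frac{\prod_{i=1}^m \Gamma(b_i + 2\beta_i)}{\prod_{i=1}^p \Gamma(a_i + 2\alpha_i)}\, t^{2\mathbbm{H}} =: C_{A,B}\, t^{2\mathbbm{H}},
\]
where the multiplicative constant $C_{A,B}>0$ is finite under Assumption~\ref{ass:AllHdensity} (all the Gamma arguments are strictly positive by item~2 and by $\alpha_i,\beta_j>0$).

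Next I would simply compare the exponent $2\mathbbm{H}$ with $1$. If $\mathbbm{H}<1/2$ then $2\mathbbm{H}<1$, hence $\operatorname{Var}(X^{\mathbbm{H},A,B}_t)/t = C_{A,B}\, t^{2\mathbbm{H}-1}\to 0$ as $t\to\infty$, which is the definition of sub-diffusion. If $\mathbbm{H}=1/2$ then $\operatorname{Var}(X^{\mathbbm{H},A,B}_t)=C_{A,B}\, t$, i.e., linear growth, which is normal (Brownian-type) diffusion. If $\mathbbm{H}\in(1/2,1)$ then $2\mathbbm{H}-1>0$ so $\operatorname{Var}(X^{\mathbbm{H},A,B}_t)/t\to\infty$, giving super-diffusion.

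There is no real obstacle here; the proposition is a two-line corollary of Proposition~\ref{prop:CFDensityMomentsCovariance}(3). The only thing worth a sentence of commentary is that the constant $C_{A,B}$ is purely $t$-independent, so the diffusive regime is determined exclusively by $\mathbbm{H}$ and not by the gWf parameters $A,B$. This is consistent with the time-change representation in Proposition~\ref{Prop:SubordRepresen}, where the randomization by $Y_{A,B}$ only rescales the variance by a finite positive constant $\mathbb{E}[Y_{A,B}]$ and leaves the power of $t$ untouched.
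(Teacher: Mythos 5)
Your proposal is correct and follows exactly the paper's own (very terse) argument: the paper likewise just compares the variance $\mathbb{E}[(X^{\mathbbm{H},A,B}_t)^2]=C_{A,B}\,t^{2\mathbbm{H}}$ from Proposition~\ref{prop:CFDensityMomentsCovariance}(3) against linear growth in $t$. Your write-up merely makes explicit the finiteness and $t$-independence of the constant, which the paper leaves implicit.
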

\begin{proof}
    The claim is obtained by comparing the asymptotic behavior of the gFHp variance with respect to linear growth in time; see Proposition~\ref{prop:CFDensityMomentsCovariance}-3.
\end{proof}

\section{Outlook and Discussion}
We applied the generalized Wright analysis from \cite{BCDS24} to define a new class of processes $X^{\mathbbm{H},A,B}_t$, $t \geq 0$, called generalized Fox-$H$ processes, and conducted a comprehensive examination of their essential properties. A particularly remarkable aspect of these processes is their representation in terms of fBm $ B^\mathbbm{H}$ and an independent positive random variable $ Y_{A,B} $ that possesses a F$H$dam density.  The gFHp has the same f.d.d.~as the time-changed fBm, that is, $B^\mathbbm{H}_{t Y_{A,B}^{1/2\mathbbm{H}}}$ and also as $\sqrt{Y_{A,B}}B^\mathbbm{H}_t$, $t\ge0$. These representations of the gFHp allow us to derive the path properties from fBm, as well as to perform simulations. It has stationary increments, is $\mathbbm{H}$-self-similar, H{\"o}lder continuous of order $\gamma <\mathbbm{H}$. This class of processes includes well-known processes like Bm, fBm, ggBm, among others. 
Further directions for investigation within this framework include:
\begin{itemize}
    \item[1.] SPDE driven by gFHp, for example  
$$
\partial_t u_t = Au_t + u_t \diamond \mathrm{d}X_t. 
$$
where $A$ is a linear symmetric compact operator;
    \item[2.] The existence of the Green function for the time-fractional heat equation using the gFHp;
    \item[3.] The representation of gFHp via Ornstein-Uhlenbeck processes;
    \item[4.] The local times (or self-intersection local times) of $X^{\mathbbm{H}, A, B}$ in higher dimensions and study their properties;
    \item[5.] Malliavin calculus and its applications to quantum fields.
\end{itemize}

\vspace{1cm}

\bibliographystyle{alpha}
\bibliography{Hanalysis}

\begin{thebibliography}{KdSSU98}

\bibitem[AKR98]{AKR97a}
S.~Albeverio, Y.~G. Kondratiev, and M.~R{\"o}ckner.
\newblock Analysis and geometry on configuration spaces: The {G}ibbsian case.
\newblock {\em J.\ Funct.\ Anal.}, 157:242--291, 1998.

\bibitem[BCdS25a]{BCDS23}
L.~Beghin, L.~Cristofaro, and J.~L. da~Silva.
\newblock {Fox-$H$ densities and completely monotone generalized Wright
  functions}.
\newblock {\em J.\ Theoret.\ Probab.}, 38(18), 2025.

\bibitem[BCdS25b]{BCDS24}
L.~Beghin, L.~Cristofaro, and J.~L. da~Silva.
\newblock Generalized {W}right analysis in infinite dimensions.
\newblock {\em Integral Equations and Operator Theory}, 97(4):28, 2025.

\bibitem[BCG24]{Beghin2022}
L.~Beghin, L.~Cristofaro, and J.~Gajda.
\newblock {Non-Gaussian Measures in Infinite Dimensional Spaces: the Gamma-Grey
  Noise}.
\newblock {\em Potential Anal.}, 60(4):1571--1593, 2024.

\bibitem[BCM24]{BCM24}
L.~Beghin, L.~Cristofaro, and Y.~Mishura.
\newblock A class of processes defined in the white noise space through
  generalized fractional operators.
\newblock {\em Stoc. Process.\ Appl.}, 178:104494, 2024.

\bibitem[BCP26]{BCP26}
L.~Beghin, L.~Cristofaro, and F.~Polito.
\newblock Generalized random processes related to {H}adamard operators and {L}e
  {R}oy measures.
\newblock {\em Adv. Differ. Equ.}, 31:83--124, 2026.

\bibitem[Ber69]{Ber69}
S.~M. Berman.
\newblock Local times and sample function properties of stationary {G}aussian
  processes.
\newblock {\em Trans. Amer. Math. Soc.}, 137:277--299, 1969.

\bibitem[BK95]{BK95}
Y.~M. Berezansky and Y.~G. Kondratiev.
\newblock {\em Spectral Methods in Infinite-Dimensional Analysis. {V}ol. 1},
  volume 12/1 of {\em Mathematical Physics and Applied Mathematics}.
\newblock Kluwer Academic Publishers, Dordrecht, 1995.
\newblock Translated from the 1988 Russian original by P. V. Malyshev and D. V.
  Malyshev and revised by the authors.

\bibitem[DSO12]{DSO12}
J.~L. Da~Silva and M.~J. Oliveira.
\newblock Studies in fractional {P}oisson measures.
\newblock {\em Int. J. Mod. Phys.: Conf. Ser.}, 17:122--129, 2012.

\bibitem[GJ16]{JahnII}
M.~Grothaus and F.~Jahnert.
\newblock {M}ittag-{L}effler analysis {II}: {A}pplication to the fractional
  heat equation.
\newblock {\em J. Funct. Anal.}, 270(7):2732--2768, 2016.

\bibitem[GJRdS15]{JahnI}
M.~Grothaus, F.~Jahnert, F.~Riemann, and J.~L. da~Silva.
\newblock {M}ittag-{L}effler analysis {I}: {C}onstruction and characterization.
\newblock {\em J. Funct. Anal.}, 268(7):1876--1903, 2015.

\bibitem[HKPS93]{HKPS93}
T.~Hida, H.-H. Kuo, J.~Potthoff, and L.~Streit.
\newblock {\em White Noise. An Infinite-Dimensional Calculus}, volume 253 of
  {\em Mathematics and its Applications}.
\newblock Kluwer Academic Publishers Group, Dordrecht, 1993.

\bibitem[Ito88]{Ito88}
Y.~Ito.
\newblock Generalized {P}oisson functionals.
\newblock {\em Probab. Theory Relat. Fields}, 77(1):1--28, 1988.

\bibitem[KdSSU98]{KDSSU98}
Y.~G. Kondratiev, J.~L. da~Silva, L.~Streit, and G.~F. Us.
\newblock Analysis on {P}oisson and gamma spaces.
\newblock {\em Infin.\ Dimens.\ Anal.\ Quantum Probab.\ Relat.\ Top.},
  1(01):91--117, 1998.

\bibitem[KK99]{KK99}
N.~A. Kachanovsky and S.~V. Koshkin.
\newblock Minimality of {A}ppell-like systems and embeddings of test function
  spaces in a generalization of white noise analysis.
\newblock {\em Methods Funct. Anal. Topology}, 5(3):13--25, 1999.

\bibitem[Koc11]{Koc11}
A.~N. Kochubei.
\newblock General fractional calculus, evolution equations, and renewal
  processes.
\newblock {\em Integral Equations Operator Theory}, 71(4):583--600, 2011.

\bibitem[KS04]{SaiKil}
A.~A. Kilbas and M.~Saigo.
\newblock {\em {$H$}-transforms. Theory and Applications}, volume~9 of {\em
  Analytical Methods and Special Functions}.
\newblock Chapman \& Hall/CRC, Boca Raton, FL, 2004.

\bibitem[KST02]{KST02}
A.~A. Kilbas, M.~Saigo, and J.~J. Trujillo.
\newblock On the generalized {W}right function.
\newblock {\em Fract. Calc. Appl. Anal.}, 5(4):437--460, 2002.
\newblock Dedicated to the 60th anniversary of Prof. Francesco Mainardi.

\bibitem[KST06]{KilSvri}
A.~A. Kilbas, H.~M. Srivastava, and J.~J. Trujillo.
\newblock {\em {T}heory and {A}pplications of {F}ractional {D}ifferential
  {E}quations}, volume 204 of {\em North-Holland Mathematics Studies}.
\newblock Elsevier Science B.V., Amsterdam, 2006.

\bibitem[KSWY98]{KSWY98}
Y.~G. Kondratiev, L.~Streit, W.~Westerkamp, and J.-A. Yan.
\newblock Generalized functions in infinite-dimensional analysis.
\newblock {\em Hiroshima Math. J.}, 28(2):213--260, 1998.

\bibitem[Mai22]{Mai22}
F.~Mainardi.
\newblock {\em Fractional Calculus and Waves in Linear Viscoelasticity: an
  Introduction to Mathematical Models}.
\newblock World Scientific, 2022.

\bibitem[Mis08]{Mishura08}
Y.~S. Mishura.
\newblock {\em Stochastic Calculus for Fractional {B}rownian Motion and Related
  Processes}.
\newblock Springer, 2008.

\bibitem[MM09]{MM09}
A.~Mura and F.~Mainardi.
\newblock A class of self-similar stochastic processes with stationary
  increments to model anomalous diffusion in physics.
\newblock {\em Integral Transforms Spec. Funct.}, 20(3-4):185--198, 2009.

\bibitem[MMP10]{MMP10}
F.~Mainardi, A.~Mura, and G.~Pagnini.
\newblock The {$M$}-{W}right function in time-fractional diffusion processes: a
  tutorial survey.
\newblock {\em Int. J. Differ. Equ.}, pages Art. ID 104505, 29, 2010.

\bibitem[MPS05]{MPS05}
F.~Mainardi, G.~Pagnini, and R.~K. Saxena.
\newblock Fox-{$H$} functions in fractional diffusion.
\newblock {\em J. Comput. Appl. Math.}, 178(1-2):321--331, 2005.

\bibitem[MS19]{MS19}
M.~M. Meerschaert and A.~Sikorskii.
\newblock {\em Stochastic Models for Fractional Calculus}, volume~43.
\newblock Walter de Gruyter GmbH \& Co KG, 2019.

\bibitem[Oba94]{Oba}
N.~Obata.
\newblock {\em {W}hite {N}oise {C}alculus and {F}ock {S}pace}, volume 1577 of
  {\em Lecture Notes in Mathematics}.
\newblock Springer-Verlag, Berlin, 1994.

\bibitem[RS72]{RS72}
M.~Reed and B.~Simon.
\newblock {\em Methods of Modern Mathematical Physics. {I}. {F}unctional
  analysis}.
\newblock Academic Press, New York-London, 1972.

\bibitem[Sch71]{SCH71}
H.~H. Schaefer.
\newblock {\em Topological Vector Spaces}.
\newblock Graduate Texts in Mathematics, Vol. 3. Springer-Verlag, New
  York-Berlin, 1971.
\newblock Third printing corrected.

\bibitem[Sch88]{Sch88}
W.~R. Schneider.
\newblock Grey noise.
\newblock {\em Stochastic processes, physics and geometry}, pages 676--681,
  1988.

\bibitem[Sch90]{Sch90}
W.~R. Schneider.
\newblock Fractional diffusion.
\newblock In {\em Dynamics and stochastic processes ({L}isbon, 1988)}, volume
  355 of {\em Lecture Notes in Phys.}, pages 276--286. Springer, New York,
  1990.

\bibitem[Sch92]{S92}
W.~R. Schneider.
\newblock Grey noise.
\newblock In {\em Ideas and methods in mathematical analysis, stochastics, and
  applications ({O}slo, 1988)}, pages 261--282. Cambridge Univ. Press,
  Cambridge, 1992.

\bibitem[SKM93]{SKM93}
S.~G. Samko, A.~A. Kilbas, and O.~I. Marichev.
\newblock {\em Fractional Integrals and Derivatives}.
\newblock Gordon and Breach Science Publishers, Yverdon, 1993.
\newblock Theory and applications, Edited and with a foreword by S.\ M.\
  Nikol'ski{\u\i}, Translated from the 1987 Russian original, Revised by the
  authors.

\end{thebibliography}
\end{document}